\newtheorem{theorem}{Theorem}[section]
\newtheorem{lemma}[theorem]{Lemma}
\newtheorem{proposition}[theorem]{Proposition}
\newtheorem{corollary}[theorem]{Corollary}
\numberwithin{equation}{section}
\theoremstyle{definition}
\newtheorem{question}[theorem]{Question}
\newcommand{\cp}{\mathbb{C}P}
\newcommand{\zz}{\mathbb{Z}}
\newcommand{\sign}{{\rm sign}}
\begin{document}

\title[Circle actions on manifolds with three fixed points]{Non-existence of circle actions on oriented manifolds with three fixed points except in dimensions $4$, $8$ and $16$}{\thanks{This project was partially supported by the the National Natural Science Foundation of China (No. 12071337).}
\author{Hao Dong, Jianbo Wang}
\address{(Hao Dong) School of Mathematics, Tianjin University, Tianjin 300350, China}
\email{2021233011@tju.edu.cn}
\address{(Jianbo Wang) School of Mathematics, Tianjin University, Tianjin 300350, China}
\email{wjianbo@tju.edu.cn}

\begin{abstract}
Let $M$ be a  smooth $4k$-dimensional orientable closed  manifold, and assume that $M$ has at most two non-zero Pontrjagin numbers which are associated to the top dimensional Pontrjagin class and the square of the middle dimensional Pontrjagin class.
We prove that the signature of $M$ being equal to $1$ and the $\hat{A}$-genus of $M$ vanishing cannot hold at the same time except $\dim M=8, 16$.  As an application, we claim that the dimensions of oriented $S^1$-manifolds with exactly three fixed points are only $4, 8$ and $16$, and the rational projective plane whose dimension is greater than $16$ has no smooth non-trivial $S^1$-action.
\end{abstract}
\date{\today}
\subjclass[2020]{57S15, 57S25, 57R20, 37B05}
\keywords{circle action; isolated fixed point; Pontrjagin number; signature; $\hat{A}$-genus}

\maketitle


\section{Introduction}

Unless otherwise stated, all the manifolds mentioned in this paper are smooth connected, and all circle actions on the manifolds are smooth and effective.
 
Let $M$ be a $4k$-dimensional orientable closed manifold with only two possible non-zero Pontrjagin numbers $p_{\frac{k}{2}}^2[M]:=(p^{}_{\frac{k}{2}}\cdot p^{}_{\frac{k}{2}})[M]$ ($k$ even) and $p^{}_{k}[M]$ (for abbreviation, $p^{}_j: = p^{}_j(TM)$).  Note that the Pontrjagin numbers of an orientable manifold are integers. For example, the following manifolds have at most two non-zero Pontrjagin numbers:
\begin{itemize}
\item a class of highly connected manifolds, such as  $(2k-1)$-connected $4k$-dimensional manifolds;
\item a $4k$-dimensional $S^1$-manifold with exactly three isolated fixed points (\cite[Lemma 3.1]{Wiemeler23});
\item a $8k$-dimensional rational projective plane (see \Cref{sec-RPP} or \cite{Su14, FS, KenSu19}).
\end{itemize}
Such manifolds with only two possible non-zero Pontrjagin numbers have a feature that whose signature and $\hat{A}$-genus have simpler forms described as the rational linear combination of two Pontrjagin numbers (see \Cref{sec-signAhat}).

The key lemma of this paper is listed below.
\begin{lemma}\label{NOsign1Ahat0}
Let $M$ be a $4k$-dimensional oriented closed manifold with at most two non-zero Pontrjagin numbers $p_{\frac{k}{2}}^2[M]$ ($k$ even) and $p^{}_{k}[M]$.
When $k\geqslant 5$ or $k=1,3$, $\sign(M)=1$ and $\hat{A}(M)=0$ cannot hold at the same time. 
\end{lemma}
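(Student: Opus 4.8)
The plan is to express $\sign(M)$ and $\hat{A}(M)$ through the Hirzebruch formulae as $\mathbb{Q}$-linear combinations of the at most two surviving Pontrjagin numbers, and then to show that $\sign(M)=1$ together with $\hat{A}(M)=0$ would force non-integral values. Recall that in the Hirzebruch $L$-polynomial $L_n$ (resp.\ $\hat{A}$-polynomial $\hat{A}_n$) the coefficient of the top class $p_n$ is $\ell_n=\dfrac{2^{2n}(2^{2n-1}-1)\,|B_{2n}|}{(2n)!}$ (resp.\ $\widehat{a}_n=-\dfrac{|B_{2n}|}{2\,(2n)!}$), both nonzero. For even $k=2m$ one also needs the coefficient of $p_m^2$ in $L_{2m}$ and in $\hat{A}_{2m}$; I would obtain these from multiplicativity of the $L$- and $\hat{A}$-sequences, substituting $p_i\mapsto 0$ for $i\neq m$ and $p_m\mapsto u$ (and likewise with primes) into $K_{2m}(p\cdot p')=\sum_{i+j=2m}K_i(p)K_j(p')$ and reading off the coefficient of $uu'$: this yields that the coefficient of $p_m^2$ is $\ell'_{2m}=\tfrac12(\ell_m^2-\ell_{2m})$ in $L_{2m}$ and $\widehat{a}'_{2m}=\tfrac12(\widehat{a}_m^{\,2}-\widehat{a}_{2m})$ in $\hat{A}_{2m}$. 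Consequently $\sign(M)=\ell'_{2m}\,p_m^2[M]+\ell_{2m}\,p_{2m}[M]$ and $\hat{A}(M)=\widehat{a}'_{2m}\,p_m^2[M]+\widehat{a}_{2m}\,p_{2m}[M]$, the $p_m^2$-terms being absent when $k$ is odd.

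For $k$ odd --- in particular $k=1,3$ --- the claim is immediate: $\hat{A}(M)=\widehat{a}_k\,p_k[M]$ with $\widehat{a}_k\neq 0$, so $\hat{A}(M)=0$ forces $p_k[M]=0$ and hence $\sign(M)=\ell_k\,p_k[M]=0\neq 1$. For $k=2m$ with $m\geq 3$, the $2\times 2$ system $\sign(M)=1$, $\hat{A}(M)=0$ has determinant $\ell_{2m}\widehat{a}'_{2m}-\ell'_{2m}\widehat{a}_{2m}=\tfrac12(\ell_{2m}\widehat{a}_m^{\,2}-\ell_m^2\widehat{a}_{2m})>0$, so $\bigl(p_m^2[M],\,p_{2m}[M]\bigr)$ is uniquely determined. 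Carrying out the elimination and using the elementary identity $(2^{4m-1}-1)+2(2^{2m-1}-1)^2=(2^{2m}-1)^2$ to collapse the powers of $2$, one arrives at
\[
p_m^2[M]=\Bigl(\tfrac{2\,(2m)!}{2^{2m}(2^{2m}-1)\,|B_{2m}|}\Bigr)^{2},
\qquad
2\,p_{2m}[M]-p_m^2[M]=\tfrac{2\,(4m)!}{\bigl(2^{2m}(2^{2m}-1)\bigr)^{2}\,|B_{4m}|}\,.
\]
Since Pontrjagin numbers of an oriented manifold are integers, $2p_{2m}[M]-p_m^2[M]$ would have to be an integer, so it is enough to show that the right-hand side of the second identity is not.

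Write $|B_{4m}|=N/D$ in lowest terms. By the von Staudt--Clausen theorem $D$ is squarefree, coprime to $N$, with $2\mid D$ but $N$ odd; hence $\dfrac{2\,(4m)!}{(2^{2m}(2^{2m}-1))^{2}\,|B_{4m}|}=\dfrac{2\,(4m)!\,D}{(2^{2m}(2^{2m}-1))^{2}\,N}\in\mathbb{Z}$ would force $N\mid(4m)!$. But every prime factor of the numerator of a Bernoulli number is an irregular prime (a standard consequence of von Staudt--Clausen and the Kummer congruences), hence is $\geq 37$; and $N>1$ because $4m\geq 12$. For $3\leq m\leq 9$ one has $37>4m$, so $N\nmid(4m)!$ at once. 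For $m\geq 10$ one still produces a prime factor of $N$ that exceeds $4m$: $N\geq|B_{4m}|$ grows faster than exponentially in $m$, while the part of $(4m)!$ composed of primes $\leq 4m$ is governed by $\theta(4m)$ together with the multiplicities of irregular primes occurring in Bernoulli numerators, so $N$ cannot be $(4m)$-smooth. In all cases $N\nmid(4m)!$, a contradiction, which completes the even case. (The hypothesis $m\geq 3$ is genuinely needed: for $m=1,2$ one has $N=\operatorname{num}(B_4)=\operatorname{num}(B_8)=1$, the obstruction disappears, and indeed the quaternionic and octonionic projective planes realize $\sign=1$, $\hat{A}=0$ in dimensions $8$ and $16$.)

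The hard point is this last step --- showing that the numerator of $B_{4m}$ never divides $(4m)!$, equivalently that it always carries a prime factor too large to be absorbed. For small $m$ this is transparent from the structure of irregular primes, but a uniform treatment of all $m\geq 3$ requires combining von Staudt--Clausen with asymptotics for $|B_{4m}|$ and control on the prime-power content of Bernoulli numerators; everything before it is essentially formal manipulation of the Hirzebruch genera.
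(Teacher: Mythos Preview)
Your setup is correct: the odd-$k$ case and the derivation of the closed forms for $p_m^2[M]$ and $2p_{2m}[M]-p_m^2[M]$ (with $k=2m$) match the paper. The gap is in the non-integrality argument for $2p_{2m}[M]-p_m^2[M]$.

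First, the assertion that ``every prime factor of the numerator of a Bernoulli number is an irregular prime'' is false as stated. What von Staudt--Clausen and the Kummer congruences give is: if $p\mid\operatorname{num}(B_{2n})$ \emph{and} $p\nmid 2n$, then $p$ is irregular; but a prime $p\mid 2n$ with $p-1\nmid 2n$ can divide $\operatorname{num}(B_{2n})$ while remaining regular (indeed Adams' theorem guarantees $v_p(B_{2n})\ge v_p(2n)$ in that situation). Concretely $B_{10}=5/66$ and $B_{28}=23749461029/870$ with $7\mid 23749461029$, yet $5$ and $7$ are regular. The case $m=7$ (so $4m=28$) lies inside your range $3\le m\le 9$, and there the ``all prime factors of $N$ are $\ge 37>4m$'' argument already fails.

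Second, the large-$m$ step is not a proof, and the heuristic you sketch cannot be made to work by size alone. From $|B_{4m}|=2\zeta(4m)\,(4m)!/(2\pi)^{4m}$ and the von Staudt--Clausen bound $D\le e^{\theta(4m+1)}$ one finds $N=|B_{4m}|\,D\ll (4m)!\,(e/2\pi)^{4m}$, so in fact $N<(4m)!$ for all large $m$; magnitude gives no obstruction to $N\mid(4m)!$. What you would need is a prime factor of $\operatorname{num}(B_{4m})$ exceeding $4m$, and no such uniform statement about Bernoulli numerators is available as a standard result.

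The paper sidesteps this entirely by testing integrality of the \emph{other} Pontrjagin number you already computed,
\[
p_m^2[M]=\Bigl(\frac{(2m)!\,D'}{2^{2m-1}(2^{2m}-1)\,N'}\Bigr)^{2},\qquad |B_{2m}|=\frac{N'}{D'}.
\]
Here the numerator $(2m)!\,D'$ has all prime factors $\le 2m+1$ (von Staudt--Clausen), whereas Schinzel's bound $P(2^{n}-1)\ge 2n+1$ for $n\ge 13$ puts a prime $\ge 4m+1>2m+1$ into the denominator via the factor $2^{2m}-1$. Hence $p_m^2[M]\notin\zz$ for $2m\ge 14$, and the remaining even values $k=2m\in\{6,8,10,12\}$ are dispatched by direct evaluation. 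The decisive input you are missing is this Mersenne-number prime-factor bound, applied to $p_m^2[M]$ rather than to the combination involving $B_{4m}$.
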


In the dimensions $8$  and $16$, the projective planes $\mathbb{H}P^2$ and $\mathbb{O}P^2$ are both spin manifolds, and they are the examples with the signature being $1$ and vanishing $\hat{A}$-genus.

As an application of the above lemma, we discuss the circle actions on the manifold $M^{4k}$ with possible two non-zero Pontrjagin numbers $p_{\frac{k}{2}}^2[M]$ ($k$ even) and $p^{}_{k}[M]$. Let $M$ be a  manifold admitting a non-trivial $S^1$-action, and $M^{S^1}$ be the fixed point set.  Li \cite{Li2015} asked the following question.
\begin{question}\cite[Question 1.3]{Li2015}\label{Q-Liques1.3}
Given any two positive integers $k$ and $n$, whether or not there exists an orientable $S^1$-manifold $M^{4n}$ such that $M^{S^1}$ consists of $2k + 1$ fixed points?
\end{question}

Since the fixed points are isolated, the dimension of $M$ must be even (\cite[Remark 1.2]{Li2015}). For any two positive integers $2k$ and $n$, there exists an orientable $S^1$-manifold $M^{2n}$ such that $M^{S^1}$ consists of $2k$ fixed points (\cite[Example 2.1 \& Proposition 2.4]{Li2015}). For the case that $M^{S^1}$ has $2k+1$ fixed points, then $k\geqslant 1$, and Li proved that the dimension of $M$ is divisible by $4$ because of the non-zero signature. So, as a realization problem of the number of isolated fixed points and the dimension of $S^1$-manifolds, \Cref{Q-Liques1.3} are the real question to be concerned with. If there exists an orientable $S^1$-manifold $M^{4n}$ such that $M^{S^1}$ consists of three fixed points, then for any positive integer $k\geqslant 2$, Li also proved that one can construct an orientable $S^1$-manifold $N^{4n}$ from $M^{4n}$ such that $N^{S^1}$ consists of $k$ fixed points (\cite[Theorem 1.4]{Li2015}). Now, the \Cref{Q-Liques1.3} is reduced from any $k$ to $k=1$. 
\begin{question}\label{Q-Noexis3fp}
Whether or not there exists an orientable $S^1$-manifold $M^{4n}$ such that $M^{S^1}$ consists of three fixed points?
\end{question}

More details about the \Cref{Q-Liques1.3} and \Cref{Q-Noexis3fp} may be referenced in \cite{Li2015, Jang2021, Wiemeler23}. In the following, suppose that the $S^1$-manifold $M$ has exactly three fixed points. The known examples of orientable manifolds admitting a non-trivial circle action with exactly three fixed points are the projective planes $\cp^2$, $\mathbb{H}P^2$ and $\mathbb{O}P^2$. 
\begin{itemize}
\item If $\dim M = 4$, the weights of fixed points (or fixed point datum, see \Cref{sec-S1mfd3fpts}) agree with a certain linear circle action on $\mathbb{C}P^2$ (\cite[Theorem 1.8]{Li2015}, \cite[Theorem 7.1]{Jang2018}, \cite[Theorem 2.4]{Jang2020} or \cite[Corollary 4.3]{Jang2021}). 
\item If $\dim M=8$, the weights of fixed points agree with a certain circle action on $\mathbb{H}P^2$ (\cite[Theorem 1]{Kustarev2014}, \cite[Theorem 1.6]{Jang2021}).
\item
Non-existence of $12$-dimensional oriented $S^1$-manifolds with exactly three fixed points has been shown by Jang \cite{Jang2021}.
\item
Wiemeler further narrowed down the possibility of a $S^1$-manifold with exactly three fixed points in \cite[Theorem 1.3 \& Lemma 3.1]{Wiemeler23}: 
if $M$ is an orientable $S^1$-manifold with exactly three fixed points, then $\dim M = 4\cdot 2^a$ or $\dim M = 8\cdot (2^a +2^b)$ with $a, b\in\zz_{\geqslant 0}$, $a\ne b$; Moreover, all Pontrjagin numbers of $M^{4k}$ except maybe $p^2_{\frac{k}{2}}[M]$ and $p^{}_k[M]$ vanish.
\end{itemize}
Based on the work of Jang \cite{Jang2021} and Wiemeler \cite{Wiemeler23}, we prove the following main theorem that if $M$ is an orientable closed $S^1$-manifold with exactly three fixed points and $\dim M>0$, then $\dim M=4,8,16$. In each dimension,  the projective space $\cp^2$, $\mathbb{H}P^2$ and $\mathbb{O}P^2$ are the examples, respectively.
\begin{theorem}\label{thm-Noexis3fixedpts}
Let $M$ be a $4k$-dimensional orientable closed manifold. When $k\geqslant 5$ or $k=3$,  
there does not exist a non-trivial circle action on $M$ with exactly three fixed points.
\end{theorem}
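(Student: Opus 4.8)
The plan is to argue by contradiction, using Lemma~\ref{NOsign1Ahat0} as the final obstruction. Suppose $M^{4k}$ admits a nontrivial circle action with exactly three fixed points, where $k\geqslant 5$ or $k=3$. First I would invoke \cite[Theorem~1.3 \& Lemma~3.1]{Wiemeler23}: all Pontrjagin numbers of $M$ except possibly $p^{2}_{\frac{k}{2}}[M]$ (when $k$ is even) and $p^{}_{k}[M]$ vanish, so $M$ is exactly of the kind to which Lemma~\ref{NOsign1Ahat0} applies, and moreover $\dim M=4\cdot 2^{a}$ or $\dim M=8(2^{a}+2^{b})$ with $a\neq b$. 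For $k=3$ this already settles matters, since $12$ is of neither form (this dimension is in any case handled by \cite{Jang2021}); so from now on $k\geqslant 5$. A glance at the two admissible families shows that every $k\geqslant 5$ occurring among them lies in $\{6,8,10,12,16,18,\dots\}$ and in particular is even, so $\frac{k}{2}$ is a positive integer and the hypotheses of Lemma~\ref{NOsign1Ahat0} are genuinely in force.

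Next I would pin down the signature. An $S^{1}$-action with exactly three isolated fixed points makes $M$ a rational projective plane: its odd-degree rational cohomology vanishes and $\chi(M)=3$ forces $b_{0}=b_{2k}=b_{4k}=1$. Since $k$ is even, the intersection form on the one-dimensional rational vector space $H^{2k}(M;\qq)$ is a nonzero $1\times 1$ form, whence $\sign(M)=\pm 1$ (compare \cite{Li2015, Wiemeler23}). Replacing $M$ by the same manifold with the opposite orientation if necessary -- which preserves the action, the three fixed points, and, up to sign, the two surviving Pontrjagin numbers, hence all hypotheses of Lemma~\ref{NOsign1Ahat0} -- I may assume $\sign(M)=1$.

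The heart of the matter, and the step I expect to be hardest, is to prove $\hat{A}(M)=0$. The route I would take is to establish that $M$ is \emph{spin}. For $\dim M>4$ one has $H^{2}(M;\qq)=0$, so $w_{2}(M)$ (equivalently, by the Wu formula, the Wu class $v_{2}$) is carried entirely by torsion in the integral cohomology, and the tight constraints that three fixed points impose on the tangential weights -- as analyzed in \cite{Jang2021, Wiemeler23} -- should force $w_{2}(M)=0$; equivalently, the $S^{1}$-action lifts to the spin frame bundle. Granting that $M$ is spin, the classical theorem of Atiyah--Hirzebruch on spin manifolds admitting a nontrivial circle action gives $\hat{A}(M)=0$. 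Now $M$ meets the hypotheses of Lemma~\ref{NOsign1Ahat0} with $k\geqslant 5$, yet $\sign(M)=1$ and $\hat{A}(M)=0$ hold at the same time, contradicting the lemma. Hence no such action exists, which is the theorem.

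It is worth noting where the argument must stop. For $k=1,2,4$ the conclusion of Lemma~\ref{NOsign1Ahat0} fails, and $\cp^{2}$ (not spin, $\hat{A}=-\tfrac{1}{8}$) and $\mathbb{H}P^{2}$, $\mathbb{O}P^{2}$ (both spin, with $\sign=1$ and $\hat{A}=0$) realize precisely this forbidden behaviour; in particular the spin step above genuinely fails for $\cp^{2}$, so the whole scheme cannot reach $k=2$ or $k=4$. The only real gap in the plan is the spin assertion in the third paragraph; everything else is assembled from Lemma~\ref{NOsign1Ahat0} together with the structural results of Jang and Wiemeler.
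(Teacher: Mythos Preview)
Your overall architecture matches the paper's: contradiction via Wiemeler's vanishing of Pontrjagin numbers, $\sign(M)=\pm 1$, $\hat{A}(M)=0$, and then Lemma~\ref{NOsign1Ahat0}. The gap you yourself flag is real, and it is exactly the step the paper handles differently. You try to force $\hat{A}(M)=0$ by showing $M$ is spin, but knowing $H^{2}(M;\qq)=0$ says nothing about $w_{2}(M)\in H^{2}(M;\zz/2)$, and nothing in \cite{Jang2021,Wiemeler23} gives control over $2$-torsion in $H^{2}$; there is no evident mechanism by which the weight constraints at three points determine $w_{2}$. (Incidentally, your claim that three fixed points force $M$ to be a rational projective plane is also unjustified: the paper notes this is equivalent to equivariant formality, which is not automatic. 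The conclusion $\sign(M)=\pm 1$ is nonetheless available from localization, as you indicate by citing \cite{Li2015,Wiemeler23}.)

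The paper bypasses the spin question entirely. From Jang's description of the fixed point data (\cite[Proposition~4.1, Lemma~6.1]{Jang2021}) one has, for $k\geqslant 3$, that the three weight multisets satisfy $\sum a_i^{2}=\sum b_i^{2}=\sum c_i^{2}$; reducing mod $2$ gives $\sum a_i\equiv\sum b_i\equiv\sum c_i\pmod 2$, so the action is \emph{$2$-balanced} in the sense of \cite{HBJ92}. A theorem of Herrera--Herrera \cite{HHH} then yields $\hat{A}(M)=0$ for any non-trivial $2$-balanced $S^{1}$-action, with no spin hypothesis needed. With this in hand the rest of your outline goes through verbatim (and the detour through Wiemeler's dimension list to argue $k$ is even is unnecessary, since Lemma~\ref{NOsign1Ahat0} already covers odd $k$).
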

\Cref{thm-Noexis3fixedpts} presents a negative answer to the \Cref{Q-Noexis3fp} except the dimensions $4, 8$ and $16$. In \cite{Wiemeler23}, Wiemeler mentioned that there are some similarities between those dimensions which support a rational projective plane and those dimensions which support an orientable $S^1$-manifold with three fixed points. The statement above  is based on the following observation: a circle action on an orientable closed manifold $M$ with exactly
three fixed points is \emph{equivariantly formal}, i.e. 
\[
\sum_{i\geqslant 0}b_i(M)=\sum_{i\geqslant 0}b_i(M^{S^1}), 
\]
if and only if $M$ is a rational projective plane, where $b_i(M)$ denotes the $i$-th Betti number of $M$.  

In \Cref{sec-RPP}, we prove that the rational projective plane whose dimension is greater than 16 has no smooth non-trivial $S^1$-action. Note that, the projective plane $\cp^2$, $\mathbb{H}P^2$ and $\mathbb{O}P^2$ all have non-trivial smooth $S^1$-action with exactly three fixed points.

\begin{theorem}\label{thm-rppnoS1}
The dimensions of rational projective planes who have smooth non-trivial $S^1$-action are $4$, $8$ and $16$.
\end{theorem}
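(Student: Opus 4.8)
The plan is to prove \Cref{thm-rppnoS1} in two halves. The realizability half --- that dimensions $4$, $8$ and $16$ occur --- is immediate: $\cp^2$, $\mathbb{H}P^2$ and $\mathbb{O}P^2$ are rational projective planes of those dimensions and each carries a non-trivial linear $S^1$-action (in fact one with exactly three fixed points). The substance is the non-existence half: a rational projective plane of dimension greater than $16$ admits no non-trivial smooth $S^1$-action, and this I would deduce from \Cref{NOsign1Ahat0}.

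So let $M$ be a rational projective plane with $\dim M=4k>16$, carrying a non-trivial smooth $S^1$-action. Since $H^*(M;\qq)$ is concentrated in degrees $0$, $2k$ and $4k$, the Pontrjagin class $p_i(TM)$ vanishes rationally whenever $i\notin\{0,\tfrac{k}{2},k\}$; hence the only Pontrjagin numbers of $M$ that can be non-zero are $p_{\frac{k}{2}}^2[M]$ (for $k$ even) and $p_k[M]$, so $M$ satisfies the hypotheses of \Cref{NOsign1Ahat0}. The intersection form of $M$ on $H^{2k}(M;\qq)\cong\qq$ has rank one and is therefore definite, so $\sign(M)=\pm1$; reversing orientation if necessary, $\sign(M)=1$. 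By the known restrictions on the dimensions of rational projective planes (\cite{Su14, FS, KenSu19} and \Cref{sec-RPP}), a rational projective plane of dimension other than $4$, $8$, $16$ --- in particular our $M$, as $\dim M>16$ --- has $k\geqslant 5$, so \Cref{NOsign1Ahat0} applies to $M$.

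It now suffices to show that the $S^1$-action forces $\hat{A}(M)=0$, for then $\sign(M)=1$ together with $\hat{A}(M)=0$ contradicts \Cref{NOsign1Ahat0}. To get this vanishing I would argue as follows. Because $\dim M>16$, the rational cohomology of $M$ vanishes in degrees $1$ through $4$, and I expect the structural results on rational projective planes in this range to force $w_2(M)=0$ --- or, failing an outright spin structure, at least a spin$^c$ structure on $M$ with torsion first Chern class. Granting this, a non-trivial smooth $S^1$-action on $M$ has vanishing $\hat{A}$-genus: the action lifts to the spin (respectively spin$^c$) structure, possibly after passing to a double cover of $S^1$ that still acts non-trivially; by the Atiyah--Hirzebruch vanishing theorem (respectively its spin$^c$ analogue with torsion $c_1$) the equivariant index of the associated (twisted) Dirac operator vanishes; and this index equals $\langle\hat{A}(M)e^{c_1/2},[M]\rangle=\hat{A}(M)$ since a torsion class contributes nothing to a rational pairing. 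This completes the non-existence half, and hence \Cref{thm-rppnoS1}.

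I expect the main obstacle to be precisely the spin --- or torsion-$c_1$ spin$^c$ --- property of a rational projective plane of dimension exceeding $16$, as this is the one place where a soft argument does not obviously suffice; it is here that the results of \cite{Su14, FS, KenSu19} (and the analysis of \Cref{sec-RPP}) must be used essentially. One also needs those same results to ensure that the dimensions of rational projective planes leave only $4$, $8$, $16$ (realized above) and dimensions $>16$ (excluded above) to consider --- in particular to rule out dimension $12$, where, as for $\cp^2$ in dimension $4$, the $\hat{A}$-vanishing argument is not available.
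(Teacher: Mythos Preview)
Your overall architecture matches the paper's: reduce to \Cref{NOsign1Ahat0} by showing that a rational projective plane $M^{4k}$ with $k\geqslant 5$ has $\sign(M)=\pm1$, at most the two Pontrjagin numbers $p_{k/2}^2[M]$ and $p_k[M]$, and---if it carries a non-trivial $S^1$-action---$\hat{A}(M)=0$. The first two points are handled exactly as you say. The gap you yourself flag, namely the $\hat{A}$-vanishing, is real: nothing in the rational cohomology of $M$ forces $w_2(M)=0$ or even $W_3(M)=0$, since $H^2(M;\zz/2)$ and $H^3(M;\zz)$ can contain torsion that the rational hypothesis does not see. So the spin (or spin$^c$) route is not available without further input, and the references \cite{Su14, FS, KenSu19} do not supply it.

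The paper closes this gap differently, avoiding any spin-type hypothesis. Since $M$ is simply connected with $\widetilde{H}_i(M;\qq)=0$ for $i<2k$ and $2k>4$, the rational Hurewicz theorem in the form of Klaus--Kreck \cite{KlauKrec} gives that $\pi_2(M)\otimes\qq=\pi_4(M)\otimes\qq=0$, i.e.\ $\pi_2(M)$ and $\pi_4(M)$ are finite. One then invokes the theorem of Herrera--Herrera \cite{HH, HH2012}: an oriented closed manifold with finite $\pi_2$ and finite $\pi_4$ admitting a non-trivial smooth $S^1$-action has $\hat{A}(M)=0$. This is precisely the non-spin extension of Atiyah--Hirzebruch that your spin$^c$-with-torsion-$c_1$ heuristic is groping toward, and it applies here without any control on $w_2$. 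With $\hat{A}(M)=0$ in hand, the contradiction with \Cref{NOsign1Ahat0} goes through as you wrote.
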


The paper is organized as follows: In \Cref{sec-signAhat}, we discuss the signature and $\hat{A}$-genus of a manifold with only two possible non-zero Pontrjagin numbers and prove \Cref{NOsign1Ahat0}.
The main \Cref{thm-Noexis3fixedpts} is proved in \Cref{sec-S1mfd3fpts}. In \Cref{sec-RPP}, we prove \Cref{thm-rppnoS1}.

\section{Manifolds with at most two non-zero Pontrjagin numbers}\label{sec-signAhat}

By the Hirzebruch signature theorem (\cite[Theorem 8.2.2]{H} or \cite[Theorem 19.4]{MilnorStash}), which expresses the signature of an orientable closed  manifold $M^{4k}$  as the $L$-genus, 
\[
\sign(M)=L_{k}[M],
\]
where 
\[
L=1+L_1(p^{}_1)+L_2(p^{}_1,p^{}_2)+L_3(p^{}_1,p^{}_2,p^{}_3)+\cdots
\]
 is the Hirzebruch $L$-polynomial in the Pontrjagin classes of $M^{4k}$, and   the multiplicative sequence  $\{L_i(p^{}_1, \dots, p^{}_i)\}$ of $L$-polynomial  belongs to the power series 
 \[
 Q^{}_L(z)=\dfrac{\sqrt{z}}{\tanh\sqrt{z}}.
\]
\begin{lemma}\label{H}\cite[Lemma 1.4.1]{H}
Let $\{K_j(p^{}_1,\dots,p^{}_j)\}$ be the multiplicative sequence corresponding to the power series $Q(z)=\sum_{i=0}^\infty b_iz^i$.
Then the coefficient of $p^{}_k$ in $K_k$ is equal to $s_k$, and the coefficient of $p_{\frac{k}{2}}^2$ ($k$ even) in $K_{k}$ is $\frac{1}{2}\left(s_{\frac{k}{2}}^2-s^{}_{k}\right)$, where the $s^{}_k$ can be calculated by a formula of Cauchy:
\[
1-z\frac{d}{dz}\log Q(z)=Q(z)\frac{d}{dz}\left(\frac{z}{Q(z)}\right)=\sum_{j=0}^\infty(-1)^js^{}_jz^j.
\]
\end{lemma}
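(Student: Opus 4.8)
The plan is to reduce the two coefficient computations to the elementary product identity $\prod_{\zeta^{n}=1}(1+\zeta t)=1+(-1)^{n-1}t^{n}$ (immediate from $\prod_{\zeta^{n}=1}(X-\zeta)=X^{n}-1$), which manufactures formal Pontrjagin classes in which all but one, or all but two, of the $p_{\ell}$ vanish. First I would fix the bookkeeping: recall that if $1+p_{1}+p_{2}+\cdots=\prod_{i}(1+\beta_{i})$ in formal variables $\beta_{i}$, then the multiplicative sequence is characterized by $1+K_{1}+K_{2}+\cdots=\prod_{i}Q(\beta_{i})$, with each $K_{j}$ weighted-homogeneous of degree $j$ under $\deg p_{\ell}=\ell$. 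Writing $\log Q(z)=\sum_{m\ge 1}c_{m}z^{m}$ (so $c_{0}=0$ since $Q(0)=1$), the two displayed forms of Cauchy's formula agree because $Q\tfrac{d}{dz}(z/Q)=1-zQ'/Q=1-z\tfrac{d}{dz}\log Q(z)=1-\sum_{m\ge1}m c_{m}z^{m}$, and comparing this with $\sum_{j\ge0}(-1)^{j}s_{j}z^{j}$ yields $s_{0}=1$ and $s_{m}=(-1)^{m-1}m\,c_{m}$ for $m\ge 1$. This identity is the only place Cauchy's formula enters.

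Next, for the coefficient of $p_{k}$ in $K_{k}$, I would substitute $\beta_{i}=\zeta_{i}t$ for $i=1,\dots,k$, where $\zeta_{1},\dots,\zeta_{k}$ are the $k$-th roots of unity. By the product identity the associated class has $p_{1}=\dots=p_{k-1}=0$ and $p_{k}=(-1)^{k-1}t^{k}$; since $p_{k}$ is the only weighted-degree-$k$ monomial surviving this substitution, $K_{k}$ evaluated here equals $(\text{coeff.\ of }p_{k}\text{ in }K_{k})\cdot(-1)^{k-1}t^{k}$. On the other hand, $\log\prod_{i}Q(\zeta_{i}t)=\sum_{m\ge1}c_{m}t^{m}\sum_{i}\zeta_{i}^{m}=\sum_{k\mid m}k\,c_{m}t^{m}$, because $\sum_{i}\zeta_{i}^{m}$ equals $k$ if $k\mid m$ and $0$ otherwise; exponentiating, the $t^{k}$-coefficient of $\prod_{i}Q(\zeta_{i}t)$ is $k c_{k}$. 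Equating the two expressions gives the coefficient of $p_{k}$ in $K_{k}$ equal to $(-1)^{k-1}k c_{k}=s_{k}$.

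For the coefficient of $p_{k/2}^{2}$ with $k$ even, I would run the same argument with $\beta_{i}=\zeta_{i}t$ for $i=1,\dots,k/2$ and $\zeta_{i}$ the $(k/2)$-th roots of unity, so that $p_{k/2}=(-1)^{k/2-1}t^{k/2}$ is the only nonzero $p_{\ell}$; then $K_{k}$ evaluated here equals $(\text{coeff.\ of }p_{k/2}^{2}\text{ in }K_{k})\cdot t^{k}$, while $\log\prod_{i}Q(\zeta_{i}t)=\sum_{(k/2)\mid m}\tfrac{k}{2}c_{m}t^{m}=\tfrac{k}{2}c_{k/2}t^{k/2}+\tfrac{k}{2}c_{k}t^{k}+\cdots$, whose exponential has $t^{k}$-coefficient $\tfrac{k}{2}c_{k}+\tfrac12\big(\tfrac{k}{2}c_{k/2}\big)^{2}$. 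Using $s_{m}=(-1)^{m-1}m c_{m}$ together with $k$ even, one has $\tfrac{k}{2}c_{k}=-\tfrac12 s_{k}$ and $\big(\tfrac{k}{2}c_{k/2}\big)^{2}=s_{k/2}^{2}$, so the coefficient equals $\tfrac12(s_{k/2}^{2}-s_{k})$, as claimed. The only point needing genuine care is this last bit of sign bookkeeping, together with the routine check that the substitutions above kill every monomial of $K_{k}$ other than $p_{k}$ (resp.\ $p_{k/2}^{2}$); I do not expect any substantial obstacle.
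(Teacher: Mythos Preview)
The paper does not supply its own proof of this lemma; it is quoted verbatim as \cite[Lemma~1.4.1]{H} and used as a black box. So there is nothing in the paper to compare against.

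Your argument is correct. The roots-of-unity substitution $\beta_i=\zeta_i t$ is exactly the right device: it forces $\prod_i(1+\beta_i)=1+(-1)^{n-1}t^{n}$, so that among $p_1,\dots,p_n$ only $p_n$ survives, and since there are only $n$ formal roots one also has $p_\ell=0$ for $\ell>n$. This cleanly isolates the monomial $p_k$ (resp.\ $p_{k/2}^2$) in $K_k$. The log computation and the identification $s_m=(-1)^{m-1}m\,c_m$ from the Cauchy formula are carried out correctly, and the sign bookkeeping in the second case (using that $k$ is even so $k c_k=-s_k$, and that $((k/2)c_{k/2})^2=s_{k/2}^2$ regardless of the parity of $k/2$) is right. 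The one point you flag as needing care---that after the second substitution every monomial of $K_k$ other than $p_{k/2}^2$ dies---is immediate once you note there are only $k/2$ formal roots, so $p_\ell=0$ for all $\ell>k/2$ as well as for $1\le\ell<k/2$.

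This is in fact essentially Hirzebruch's own argument (he phrases the first part via the observation that the coefficient of $p_k$ in $K_k$ equals the coefficient of $z^k$ in the ``indicator'' series obtained from a single-block substitution, which amounts to the same roots-of-unity trick); your write-up is a clean self-contained version of it.
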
  

If $M$ is an orientable closed manifold with at most two non-zero Pontrjagin numbers,  by \Cref{H}, 
\begin{equation}\label{eq-signM}
\sign(M)=
\begin{cases}
s^{}_{k}p^{}_{k}[M]+s^{}_{\frac{k}{2},\frac{k}{2}}p_{\frac{k}{2}}^2[M], & k~\text{even};\\
s^{}_kp^{}_k[M], & k~\text{odd},
\end{cases}
\end{equation}
where 
\[
s_k=\dfrac{2^{2k}(2^{2k-1}-1)|B_{2k}|}{(2k)!}
\]
denotes the coefficient of $p^{}_k$  (see also \cite[Problem 19-C]{MilnorStash})  and 
\[
s^{}_{\frac{k}{2},\frac{k}{2}}=\frac{s_{\frac{k}{2}}^2-s^{}_{k}}{2}
\]
denotes the coefficient of $p^2_{\frac{k}{2}}$  (see also \cite[Lemma 1.5]{Anderson69}) in the $k^{\rm th}$ $L$-polynomial, respectively, and $B_{2k}$ denotes the $2k^{\mathrm{th}}$ Bernoulli numbers: $B_2=\frac{1}{6}$, $B_4=-\frac{1}{30}$, $B_6=\frac{1}{42},\dots$. 
The calculations of the Bernoulli numbers and the first several Bernoulli numbers can be found in \cite{Kellner04} and \cite{Wagstaff02}.

Similarly, Hirzebruch (\cite[p.197]{H}) defined the $\hat{A}$-sequence of $M^{4k}$ as a certain polynomials in the Pontrjagin classes of $M^{4k}$. More concretely, the power series
\[
Q^{}_{\hat{A}}(z)=\frac{\frac{1}{2}\sqrt{z}}{\sinh \left(\frac{1}{2}\sqrt{z}\right)}
\]
defines a multiplicative sequence $\left\{\hat{A}_{j}(p^{}_{1}, \ldots, p^{}_{j})\right\}$, and the Hirzebruch $\hat{A}$-polynomial is 
\[
\hat{A}=1+\hat{A}_1(p^{}_1)+\hat{A}_2(p^{}_1,p^{}_2)+\hat{A}_3(p^{}_1,p^{}_2,p^{}_3)+\cdots
\]
The $\hat{A}$-genus $\hat{A}(M)$ is the genus associated with the power series $Q^{}_{\hat{A}}(z)$ as above, and 
\[
\hat{A}(M)=\hat{A}_k(p^{}_1,\dots,p^{}_k)[M].
\]

In the case that the manifold has at most two non-zero Pontrjagin numbers, by Lemma \ref{H}, 
\begin{equation}\label{eq-Ahatgenus}
\hat{A}(M)=
\begin{cases}
a^{}_{k}p^{}_{k}[M]+a^{}_{\frac{k}{2},\frac{k}{2}}p_{\frac{k}{2}}^2[M], & k~\text{even};\\
a^{}_{k}p^{}_{k}[M], & k~\text{odd}.
\end{cases}
\end{equation}
where 
\[
a^{}_k=-\dfrac{|B_{2k}|}{2(2k)!}
\]
denotes the coefficient of $p^{}_k$  and 
\[
a^{}_{\frac{k}{2},\frac{k}{2}}=\frac{a_{\frac{k}{2}}^2-a^{}_{k}}{2}
\]
denotes the coefficient of $p^2_k$ in the $k^{\rm th}$ $\hat{A}$-polynomial, respectively. Note that the above content in this section can also be referenced in \cite{FS, Hu2023}.

In the proof of \Cref{NOsign1Ahat0} we need some properties of Bernoulli numbers, which will follow from the lemma below.

\begin{lemma}\label{On}\cite[Proposition 5.4]{Kellner04}
 Let $n$ be an even positive integer, then the Bernoulli number 
\[
B_n=(-1)^{\frac{n}{2}-1}\cdot\frac{\mathop{\prod}\limits_{p-1\nmid n}p^{\tau(p,n)+\mathrm{ord}_pn}}{\mathop{\prod}\limits_{p-1 | n} p},
\]
where $p$ is prime, $\mathrm{ord}_pn$ is the number of times $p$ divides $n$, and 
\[
\tau(p,n):=\sum_{\nu=1}^{\infty}\#(\Psi_{\nu}^{\mathrm{irr}}\cap\{(p, n\mod \varphi(p^\nu))\}),
\]
Here, ``$x \mod y$" gives the least non-negative residue $x^\prime$ with $0\leqslant x^\prime < y$ for positive integers $x$ and $y$,  and $\Psi_{\nu}^{\mathrm{irr}}$ denotes the set of irregular pairs of order $\nu$, $\varphi(p^\nu)$ is defined to be the number of integers between $1$ and $p^\nu$ relatively prime to $p^\nu$.
\end{lemma}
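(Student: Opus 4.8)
The statement is really three independent assertions packaged into one identity: the global sign $(-1)^{n/2-1}$, the exact denominator $\prod_{p-1\mid n}p$, and the exact numerator $\prod_{p-1\nmid n}p^{\tau(p,n)+\mathrm{ord}_pn}$. My plan is to prove it by determining, prime by prime, the $p$-adic valuation $\mathrm{ord}_p(B_n)$, and separately fixing the archimedean sign; assembling these local valuations together with the sign reconstructs $B_n$, and the product formula follows.

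For the sign I would use Euler's evaluation
\[
\zeta(n)=\frac{(-1)^{n/2-1}(2\pi)^{n}}{2\,n!}\,B_n\qquad(n\text{ even}),
\]
together with $\zeta(n)>0$ and $(2\pi)^n/(2\,n!)>0$, which forces $\mathrm{sign}(B_n)=(-1)^{n/2-1}$. For the primes $p$ with $p-1\mid n$ I would invoke the von Staudt--Clausen theorem, $B_n+\sum_{p-1\mid n}\frac1p\in\zz$: this shows that each such $p$ appears to exponent $-1$ in $B_n$, i.e. $\mathrm{ord}_p(B_n)=-1$, producing precisely the squarefree denominator $\prod_{p-1\mid n}p$ and contributing nothing to the numerator.

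The substance lies in the primes $p$ with $p-1\nmid n$, for which I must show $\mathrm{ord}_p(B_n)=\mathrm{ord}_pn+\tau(p,n)$. Here $B_n/n$ is $p$-integral (a standard consequence of the Kummer congruences), so by the product rule it suffices to prove $\mathrm{ord}_p(B_n/n)=\tau(p,n)$. The tool is the Kummer congruence: for $p-1\nmid n$ and $n\equiv n'\pmod{\varphi(p^\nu)}$ one has
\[
(1-p^{\,n-1})\tfrac{B_n}{n}\equiv(1-p^{\,n'-1})\tfrac{B_{n'}}{n'}\pmod{p^\nu},
\]
equivalently the existence of the Kubota--Leopoldt $p$-adic $L$-function interpolating $(1-p^{\,s-1})\frac{B_s}{s}$. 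By construction the set $\Psi_\nu^{\mathrm{irr}}$ of irregular pairs of order $\nu$ is defined so that $(p,\,n\bmod\varphi(p^\nu))\in\Psi_\nu^{\mathrm{irr}}$ holds exactly when $\mathrm{ord}_p(B_n/n)\geqslant\nu$; summing the resulting indicators telescopes,
\[
\tau(p,n)=\sum_{\nu\geqslant1}\#\bigl(\Psi_\nu^{\mathrm{irr}}\cap\{(p,\,n\bmod\varphi(p^\nu))\}\bigr)=\#\{\nu\geqslant1:\mathrm{ord}_p(B_n/n)\geqslant\nu\}=\mathrm{ord}_p(B_n/n),
\]
which is the desired valuation.

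The main obstacle is precisely this last step: verifying that membership in $\Psi_\nu^{\mathrm{irr}}$ is equivalent to the valuation bound $\mathrm{ord}_p(B_n/n)\geqslant\nu$ for \emph{every} order $\nu$ simultaneously, rather than at a single level. This requires the full Kummer congruence hierarchy --- or, more conceptually, the measure-theoretic construction of the $p$-adic $L$-function and the interpretation of its higher vanishing via irregular pairs of increasing order --- and not merely an elementary manipulation. Once that equivalence is in hand, the telescoping above is formal, and the sign, denominator, and numerator pieces combine into the stated product formula for $B_n$.
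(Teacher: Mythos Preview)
The paper does not prove this lemma; it is quoted verbatim from Kellner \cite[Proposition~5.4]{Kellner04} and used only as a black box (in fact, only the much weaker consequence that every prime dividing the denominator of $B_n$ is at most $n+1$ is ever used downstream). So there is no ``paper's own proof'' to compare against.

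Your outline is the standard one and is correct in shape: the sign via Euler's formula for $\zeta(n)$, the denominator via von Staudt--Clausen, and the numerator via Kummer congruences together with the defining property of $\Psi_\nu^{\mathrm{irr}}$. You are also right that the only genuine work is the last step. Note, however, that you are being slightly circular when you write ``by construction the set $\Psi_\nu^{\mathrm{irr}}$ \dots\ is defined so that $(p,n\bmod\varphi(p^\nu))\in\Psi_\nu^{\mathrm{irr}}$ holds exactly when $\mathrm{ord}_p(B_n/n)\geqslant\nu$'': in Kellner's paper $\Psi_\nu^{\mathrm{irr}}$ is \emph{not} defined this way, but rather by an inductive lifting procedure on indices in the range $[2,\varphi(p^\nu))$, and the equivalence you assert is precisely the content of his Proposition~5.3/5.4. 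So your sketch reduces the lemma to exactly the statement being quoted rather than proving it independently; to make it a self-contained proof you would need to carry out the inductive argument showing that the lifted irregular pairs track $\mathrm{ord}_p(B_n/n)$ level by level, which is what Kellner does.
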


\Cref{On} gives the numerator and denominator of the Bernoulli number if we consider it as an irreducible fraction.
In particular, the largest prime factor of the denominator of the $n$-th Bernoulli number $B_n$ is less than or equal to $n+1$, which will play an important role in the following proof.

\begin{proof}[Proof of \Cref{NOsign1Ahat0}]
(1) If $k$ is odd, then by the equations in \eqref{eq-signM}, \eqref{eq-Ahatgenus}, 
\begin{equation}\label{eq-koddsignAhat}
\sign(M)=\frac{s^{}_k}{a^{}_k}\hat{A}(M)=-2^{2k+1}(2^{2k-1}-1)\hat{A}(M).
\end{equation}
It's obviously that $\sign(M)=1$ and $\hat{A}(M)=0$  cannot hold at the same time. 
	
(2) If $k$ is even, suppose that
\begin{equation}\label{eq-systemofeqs-signAhat}
\left\{
\begin{aligned}
\sign(M) & =s^{}_{k}p^{}_{k}[M]+s^{}_{\frac{k}{2},\frac{k}{2}}p_{\frac{k}{2}}^2[M]=1,\\
 \hat{A}(M) & =a^{}_{k}p^{}_{k}[M]+a^{}_{\frac{k}{2},\frac{k}{2}}p_{\frac{k}{2}}^2[M]=0,
\end{aligned}
\right.
\end{equation}
where 
\[
\begin{aligned}
s^{}_k & =\dfrac{2^{2k}(2^{2k-1}-1)|B_{2k}|}{(2k)!}, & s^{}_{\frac{k}{2},\frac{k}{2}} & =\dfrac{s_{\frac{k}{2}}^2-s^{}_{k}}{2},  \\
a^{}_k & =-\dfrac{|B_{2k}|}{2(2k)!}, & a^{}_{\frac{k}{2},\frac{k}{2}}& =\dfrac{a_{\frac{k}{2}}^2-a^{}_{k}}{2}.
\end{aligned}
\]
It implies that 
\begin{align}
p_{\frac{k}{2}}^2[M] & =\left(\frac{k!}{2^{k-1}(2^{k}-1)|B_{k}|}\right)^2, \label{Pk2M} \\
p^{}_{k}[M] & =\frac{(2k)!|B_{k}|^2 +2(k!)^2|B_{2k}|}{2^{2k}(2^{k}-1)^2|B_{k}|^2 |B_{2k}|}.\label{P2kM} 
\end{align}
By \Cref{On}, let $N_n$ be the numerator of the Bernoulli number $B_n$, then  the Pontrjagin number 
\[
p_{\frac{k}{2}}^2[M]=\left(\frac{k!\mathop{\prod}\limits_{p-1 | k\atop p~\text{prime}}p}{2^{k-1}(2^{k}-1)|N_{k}|}\right)^2.
\]

Let $P(m)$ be the largest prime factor of the positive integer $m$.
The quantity $P(2^n-1)$ of the Mersenne number $2^n-1$ has been investigated by many authors.
For example, the best-known lower bound \cite{KF} 
\[
P(2^n-1)\geqslant 2n+1  \text{~for~}  n\geqslant 13
\]
is due to Schinzel \cite[Theorem 3]{Sch1962}.
Since the largest prime factor of the numerator of $p_{\frac{k}{2}}^2[M]$ is not greater  than $k+1$ and $P(2^{k}-1)\geqslant 2k+1$ for $k\geqslant 13$, so $p_{\frac{k}{2}}^2[M]$ can't be an integer for $k\geqslant 13$.

We can put $k=6$ into the equation \eqref{P2kM} of $p^{}_{k}[M]$ and the value of 
\[
p^{}_{6}[M]=\frac{158175}{691}
\]
 isn't an integer.
Similarly, we put $k=8,10,12$ into the equation \eqref{Pk2M} of $p_{\frac{k}{2}}^2[M]$ and the values of $p_{\frac{k}{2}}^2[M]$ are listed in the following table.
\begin{table}[h]
\centering
\caption{Calculations of $p_{\frac{k}{2}}^2[M]$ for $k=8, 10, 12$} 
\begin{tabular}{c|ccc}
  \toprule 
  $k$ & $8$ & $10$ & $12$  \smallskip \\
  \midrule 
 $p_{\frac{k}{2}}^2[M]$ & $\dfrac{396900}{289}$ & $\dfrac{8037225}{961}$  & $\dfrac{24312605625}{477481}$ \\
  \bottomrule
 \end{tabular}
\end{table}

These Pontrjagin numbers aren't  integers, so  $\sign(M)=1$ and $\hat{A}(M)=0$  cannot hold at the same time. 
\end{proof}

\begin{proposition}
Assume that  $M$ is an orientable closed  manifold of dimension $4k>48$, and all the Pontrjagin numbers of $M$ except maybe $p_{\frac{k}{2}}^2[M]$ ($k$ even) and $p^{}_{k}[M]$ vanish. If $\hat{A}(M)=0$, 
then $\sign(M)=0$ or $|\sign(M)|\geqslant (2k+1)^2$.
\end{proposition}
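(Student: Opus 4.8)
\medskip
\noindent\emph{Proof strategy.}
The plan is to rerun the argument of \Cref{NOsign1Ahat0} with the prescribed value $1$ of the signature replaced by an arbitrary integer $\sigma=\sign(M)$, and then to convert the resulting constraint into a divisibility statement about $\sigma$. If $k$ is odd, no monomial $p_{\frac{k}{2}}^2$ occurs, so \eqref{eq-signM}, \eqref{eq-Ahatgenus} and the computation \eqref{eq-koddsignAhat} still give $\sign(M)=-2^{2k+1}(2^{2k-1}-1)\hat{A}(M)$; since $\hat{A}(M)=0$ this forces $\sign(M)=0$, one of the two permitted conclusions, and the hypothesis $4k>48$ plays no role here.

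Assume now that $k$ is even. I would start from the linear system
\[
s_kp_k[M]+s_{\frac{k}{2},\frac{k}{2}}p_{\frac{k}{2}}^2[M]=\sigma,\qquad a_kp_k[M]+a_{\frac{k}{2},\frac{k}{2}}p_{\frac{k}{2}}^2[M]=0,
\]
eliminate $p_k[M]$ by means of the second equation, and simplify the coefficient of $p_{\frac{k}{2}}^2[M]$ in the first. This is the same manipulation that takes \eqref{eq-systemofeqs-signAhat} to \eqref{Pk2M}; it rests on the identity $2(2^{k-1}-1)^2+(2^{2k-1}-1)=(2^{k}-1)^2$ and should collapse the system into the single relation
\[
\sigma=\lambda^{2}\,p_{\frac{k}{2}}^2[M],\qquad \lambda:=\frac{2^{k-1}(2^{k}-1)|B_k|}{k!}\in\qq,
\]
which is consistent with \eqref{Pk2M} on setting $\sigma=1$. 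Since $p_{\frac{k}{2}}^2[M]\in\zz$, if it vanishes then $\sigma=0$. Otherwise I would localize at a large prime: by Schinzel's bound $P(2^{k}-1)\geqslant 2k+1$, which holds exactly when $k\geqslant 13$, i.e.\ when $4k>48$, pick a prime $q\mid 2^{k}-1$ with $q\geqslant 2k+1$. Then $q$ is odd and $q>k+1$, so $q$ divides none of $2^{k-1}$, $k!$, or the denominator of $B_k$ (the last by \Cref{On}), whence $v_q(\lambda)\geqslant v_q(2^{k}-1)\geqslant 1$. Consequently $v_q(\sigma)=2v_q(\lambda)+v_q(p_{\frac{k}{2}}^2[M])\geqslant 2$, using $v_q(p_{\frac{k}{2}}^2[M])\geqslant 0$, and since $\sigma$ is a nonzero integer this gives $|\sigma|\geqslant q^{2}\geqslant(2k+1)^{2}$.

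The only step demanding real care is the algebraic reduction of the $2\times 2$ system to the compact square $\lambda^{2}$ (together with checking its normalization against \eqref{Pk2M}): what makes the method work is precisely that the constraint $\hat{A}(M)=0$ turns $\sign(M)$ into a perfect square of a rational times an integer Pontrjagin number, after which everything is valuation bookkeeping, and the two arithmetic inputs — the bound on the denominator of $B_k$ from \Cref{On} and Schinzel's lower bound on $P(2^{k}-1)$ — are already in place in \Cref{sec-signAhat}. It is worth noting that the restriction $4k>48$ is intrinsic to this approach, since it is exactly the regime where Schinzel's estimate applies; for $4k\leqslant 48$ one would instead have to argue by the kind of direct computation carried out in the proof of \Cref{NOsign1Ahat0}.
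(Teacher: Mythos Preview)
Your proposal is correct and follows essentially the same route as the paper: for odd $k$ use \eqref{eq-koddsignAhat} to get $\sign(M)=0$, and for even $k$ solve the $2\times2$ system to express $\sign(M)=\lambda^{2}\,p_{\frac{k}{2}}^{2}[M]$, then invoke Schinzel's bound $P(2^{k}-1)\geqslant 2k+1$ together with \Cref{On} to force $(P(2^{k}-1))^{2}\mid\sign(M)$. The only cosmetic difference is that the paper writes the fraction with the numerator $N_{k}$ of $B_{k}$ displayed explicitly and argues directly that the prime $P(2^{k}-1)$ survives in the denominator, whereas you phrase the same cancellation check via $q$-adic valuations; the substance is identical.
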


\begin{proof}
If $k$ is odd, the equation \eqref{eq-koddsignAhat} implies that $\hat{A}(M)=0$ is equivalent to $\sign(M)=0$.
If $k$ is even, the system of equations \eqref{eq-signM} and \eqref{eq-Ahatgenus} implies that 
\[
p_{\frac{k}{2}}^2[M]=\left(\frac{k!\mathop{\prod}\limits_{p-1 | k\atop p~\text{prime}}p}{2^{k-1}(2^{k}-1)|N_{k}|}\right)^2\cdot \sign(M).
\]
Since $p_{\frac{k}{2}}^2[M]$ is an integer and the largest prime factor of the positive integer of $2^k-1$ satisfies the inequality $P(2^{k}-1)\geqslant 2k+1$ for $k>12$, so the signature of $M$ is divisible by $(P(2^{k}-1))^2$. Hence $\sign(M)=0$ or $|\sign(M)|\geqslant (2k+1)^2$.
\end{proof}

\section{Circle actions on manifolds with three fixed points}\label{sec-S1mfd3fpts}

Let the circle act on a $2n$-dimensional oriented closed manifold $M$ with a discrete fixed points set. At each fixed point $x\in M^{S^1}$, the tangent space $T_xM$ splits as a sum of irreducible non-trivial complex $1$-dimensional $S^1$ representations
\[
T_xM=L^{w_{x,1}}\oplus\cdots \oplus L^{w_{x,n}},
\] 
where each $L^{a}$ denotes a $S^1$ representation on which $\lambda\in S^1$ acts by multiplication by $\lambda^a$. 
For each $L^{w_{x,i}}$, we choose an orientation of $L^{w_{x,i}}$ so that $w_{x,i}$ is positive, and call the positive integers $w_{x,1}, \dots, w_{x,n}$ the \emph{weights} (or \emph{exponents}) of the $S^1$-action at the (connected component of $M^{S^1}$ containing the) point $x$.
The tangent space $T_xM$ at $x$ has two orientations: one coming from the orientation on $M$ and the other coming from the orientation on the representation space $L^{w_{x,1}}\oplus\cdots \oplus L^{w_{x,n}}$.
Let $\epsilon(x)=+1$ if the two orientations agree and $\epsilon(x)=-1$ otherwise, and call it the \emph{sign} of $x$. Jang \cite{Jang2021}
defined the \emph{fixed point data} of $x$ by $\Sigma_x=\{\epsilon(x),w_{x,1},\dots, w_{x,n}\}$, 
and the \emph{fixed point data} of $M$ to be a collection of fixed point datum of the fixed points and denoted it by $\Sigma_M$. 

Note that $\sum_{i=1}^n\omega_{x,i}$ is constant on the same connected component of $M^{S^1}$.
A circle action is called \emph{$2$-balanced} if the parity of $\sum_{i=1}^n\omega_{x,i}$ does not depend on the connected component of $M^{S^1}$ (\cite[p.179]{HBJ92}).

\begin{lemma}\label{abc}\cite[Proposition 4.1 \& Lemma 6.1]{Jang2021}
Let the circle act on a smooth $4k$-dimensional oriented closed manifold $M$ with three fixed points $x_1$, $x_2$, $x_3$.
Then 
 
(1) the fixed point data of $M$ is
\begin{align*}
\Sigma_{x_1} & =\{\pm, a_1,\dots ,a_k, b_1,\dots , b_k\},\\
\Sigma_{x_2} & =\{\pm, a_1,\dots ,a_k, c_1,\dots , c_k\},\\
\Sigma_{x_3} & =\{\mp, b_1,\dots ,b_k, c_1,\dots , c_k\},
\end{align*}
for some positive integers $a_i$, $b_i$ and $c_i$, $1\leqslant i\leqslant k$.

(2) if $k\geqslant 3$, then $\sum_{i=1}^ka_i^2=\sum_{i=1}^kb_i^2=\sum_{i=1}^kc_i^2$.
\end{lemma}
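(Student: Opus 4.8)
The plan is to read everything off the Atiyah--Bott--Berline--Vergne (ABBV) localization formula, supplementing it in part~(1) with the rigidity of the equivariant signature. Write $t$ for the standard generator of $H^{2}_{S^{1}}(\mathrm{pt};\qq)\cong\qq[t]$ and set $e(x_{j}):=\prod_{i=1}^{2k}w_{x_{j},i}>0$; since each fixed point is isolated, ABBV reads
\[
\int_{M}\alpha=\sum_{j=1}^{3}\epsilon(x_{j})\,\frac{\alpha|_{x_{j}}}{e(x_{j})\,t^{2k}}\in\qq(t)\qquad(\alpha\in H^{*}_{S^{1}}(M;\qq)),
\]
and the left-hand side is $0$ whenever $\deg\alpha<4k=\dim M$. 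Taking $\alpha=1$ gives $\sum_{j}\epsilon(x_{j})/e(x_{j})=0$, i.e.\ $\epsilon(x_{1})e(x_{2})e(x_{3})+\epsilon(x_{2})e(x_{1})e(x_{3})+\epsilon(x_{3})e(x_{1})e(x_{2})=0$ with all $e(x_{j})>0$, so the three signs are not all equal. After relabelling the fixed points we may assume $\epsilon(x_{1})=\epsilon(x_{2})=-\epsilon(x_{3})=:\eta\in\{+1,-1\}$, which is the sign pattern in the statement.

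For the weight pattern in part~(1) I would invoke the rigidity of the signature operator: the equivariant signature is a constant element of $R(S^{1})$, so by the Atiyah--Bott fixed point formula the rational function
\[
t\longmapsto\sum_{j=1}^{3}\epsilon(x_{j})\prod_{i=1}^{2k}\frac{t^{\,w_{x_{j},i}}+1}{t^{\,w_{x_{j},i}}-1}
\]
is constant, hence has no poles on $\cc^{\times}$. For $m>1$ the $j$-th summand has a pole of order $d_{j}(m):=\#\{\,i:m\mid w_{x_{j},i}\,\}$ at every primitive $m$-th root of unity; requiring the principal parts to cancel there forces, for each $m$, the maximum of $d_{1}(m),d_{2}(m),d_{3}(m)$ to be attained at least twice, and --- this is the delicate point --- a careful analysis of these residues over all $m$ (with $d_{j}(1)=2k$ and the $\alpha=1$ relation) forces the multisets of weights into the triangle pattern $W_{x_{1}}=\mathbf{a}\sqcup\mathbf{b}$, $W_{x_{2}}=\mathbf{a}\sqcup\mathbf{c}$, $W_{x_{3}}=\mathbf{b}\sqcup\mathbf{c}$, with the flipped sign on $x_{3}$. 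Turning the pole-cancellation constraints into this exact combinatorial matching is the main obstacle; it is carried out in \cite[Proposition 4.1]{Jang2021} (one may instead run the analogous analysis on the components of $M^{\zz_{p}}$ for primes $p$), and I would reproduce that argument.

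For part~(2), adopt the labelling of part~(1) and put $A=\sum_{i}a_{i}^{2}$, $B=\sum_{i}b_{i}^{2}$, $C=\sum_{i}c_{i}^{2}$, $\pi_{a}=\prod_{i}a_{i}$, $\pi_{b}=\prod_{i}b_{i}$, $\pi_{c}=\prod_{i}c_{i}$. Because $T_{x}M\cong\bigoplus_{i}L^{w_{x,i}}$ as $S^{1}$-representations and a complex line bundle $\ell$ satisfies $p_{1}(\ell_{\rr})=c_{1}(\ell)^{2}$ (with $p_{1}$ rationally additive on direct sums), the equivariant first Pontrjagin class of $TM$ restricts to $p^{S^{1}}_{1}|_{x}=\bigl(\sum_{i}w_{x,i}^{2}\bigr)t^{2}$, so $p^{S^{1}}_{1}|_{x_{1}}=(A+B)t^{2}$, $p^{S^{1}}_{1}|_{x_{2}}=(A+C)t^{2}$, $p^{S^{1}}_{1}|_{x_{3}}=(B+C)t^{2}$. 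When $k\geqslant3$ the classes $1$, $p^{S^{1}}_{1}$ and $(p^{S^{1}}_{1})^{2}$ all have degree $\leqslant8<4k$, so applying ABBV to each, clearing denominators (the common factor $\eta$ cancels) and using $e(x_{1})=\pi_{a}\pi_{b}$, $e(x_{2})=\pi_{a}\pi_{c}$, $e(x_{3})=\pi_{b}\pi_{c}$ gives respectively
\[
\pi_{b}+\pi_{c}-\pi_{a}=0,\quad (A+B)\pi_{c}+(A+C)\pi_{b}-(B+C)\pi_{a}=0,\quad (A+B)^{2}\pi_{c}+(A+C)^{2}\pi_{b}-(B+C)^{2}\pi_{a}=0 .
\]
Eliminating $\pi_{a}$ by the first relation turns the second into $(A-B)\pi_{b}+(A-C)\pi_{c}=0$ and the third into $(A-B)(A+B+2C)\pi_{b}+(A-C)(A+2B+C)\pi_{c}=0$; substituting the former into the latter collapses it to $(A-B)(B-C)\pi_{b}=0$, so $A=B$ or $B=C$ (as $\pi_{b}>0$), and in either case the relation $(A-B)\pi_{b}+(A-C)\pi_{c}=0$ forces $A=B=C$, i.e.\ $\sum_{i}a_{i}^{2}=\sum_{i}b_{i}^{2}=\sum_{i}c_{i}^{2}$. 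Here $k\geqslant3$ is used only to keep $(p^{S^{1}}_{1})^{2}$ strictly below the top degree $4k$; this computation is \cite[Lemma 6.1]{Jang2021}.
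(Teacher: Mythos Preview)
The paper does not prove this lemma; it is quoted verbatim from \cite[Proposition~4.1 \& Lemma~6.1]{Jang2021} and used as a black box. So there is no in-paper argument to compare against, and your proposal should be read as an attempt to reconstruct Jang's proof.

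Your treatment of part~(2) is correct and is exactly the standard computation: localize $1$, $p_{1}^{S^{1}}$ and $(p_{1}^{S^{1}})^{2}$ via ABBV, use that all three have degree $<4k$ once $k\geqslant3$, and eliminate. The algebra you wrote checks out line by line, including the final case split $A=B$ or $B=C$ and the conclusion $A=B=C$.

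For part~(1), the sign pattern argument (localizing $\alpha=1$ to see that not all three $\epsilon(x_{j})$ can coincide) is fine. The weight-matching step, however, is only outlined: you correctly identify the mechanism (pole cancellation in the equivariant signature, or equivalently the structure of $M^{\zz_{p}}$ for each prime $p$) and then explicitly defer to \cite{Jang2021}. That is honest, but be aware that this is where all the content lies; the passage from ``the maximal pole order is attained at least twice for every $m$'' to the exact triangle decomposition $\mathbf{a}\sqcup\mathbf{b}$, $\mathbf{a}\sqcup\mathbf{c}$, $\mathbf{b}\sqcup\mathbf{c}$ with $|\mathbf{a}|=|\mathbf{b}|=|\mathbf{c}|=k$ requires a genuine combinatorial induction (over the weights, or over the primes dividing them), not just the residue observation you stated. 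If you intend to include a self-contained proof rather than a citation, that induction must be written out.
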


Since squaring doesn't change the parity of a number,
hence, by \Cref{abc},  
\[
\sum_{i=1}^ka_i\equiv\sum_{i=1}^kb_i\equiv\sum_{i=1}^kc_i\pmod 2.
\]
We can get the following result.

\begin{corollary}\label{coro}
Let the circle act on a $4k$-dimensional oriented closed manifold $M$ with three fixed points and $k\geqslant 3$, then the circle action must be $2$-balanced.
\end{corollary}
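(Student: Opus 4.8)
The plan is to read off the sum of the weights at each of the three fixed points directly from \Cref{abc}(1) and then compare their parities using \Cref{abc}(2). In the notation of \Cref{abc}, the fixed point $x_1$ has weight multiset $\{a_1,\dots,a_k,b_1,\dots,b_k\}$, so the sum of its weights is $\sum_{i=1}^k a_i+\sum_{i=1}^k b_i$; likewise the weight sums at $x_2$ and $x_3$ are $\sum_{i=1}^k a_i+\sum_{i=1}^k c_i$ and $\sum_{i=1}^k b_i+\sum_{i=1}^k c_i$. Since the three fixed points are isolated, the connected components of $M^{S^1}$ are exactly the singletons $\{x_1\},\{x_2\},\{x_3\}$, so by definition the action is $2$-balanced if and only if these three integers all have the same parity, equivalently $\sum_{i=1}^k a_i\equiv\sum_{i=1}^k b_i\equiv\sum_{i=1}^k c_i\pmod 2$.

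The crucial input is \Cref{abc}(2): because $k\geqslant 3$, one has $\sum_{i=1}^k a_i^2=\sum_{i=1}^k b_i^2=\sum_{i=1}^k c_i^2$. Combining this with the elementary fact that $n^2\equiv n\pmod 2$ for every integer $n$ gives
\[
\sum_{i=1}^k a_i\equiv\sum_{i=1}^k a_i^2=\sum_{i=1}^k b_i^2\equiv\sum_{i=1}^k b_i\pmod 2,
\]
and in the same way $\sum_{i=1}^k b_i\equiv\sum_{i=1}^k c_i\pmod 2$. Thus the three weight sums share a common parity, which is exactly the statement that the $S^1$-action is $2$-balanced.

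There is essentially no obstacle here beyond bookkeeping: the entire substance lives in \Cref{abc}, and the hypothesis $k\geqslant 3$ is needed precisely to invoke part (2) (the equality of the sums of squares), which is why the corollary carries the same restriction. The only point deserving a sentence in the write-up is that the quantity $\sum_{i}\omega_{x,i}$ appearing in the definition of ``$2$-balanced'' is genuinely constant on each connected component of $M^{S^1}$ and coincides with the naive weight sum at each fixed point — here this is immediate because every component is a single point — so the ``constant on each component'' clause introduces no extra subtlety.
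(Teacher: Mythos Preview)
Your proof is correct and follows exactly the same route as the paper: invoke \Cref{abc}(2) to get $\sum a_i^2=\sum b_i^2=\sum c_i^2$, reduce modulo $2$ using $n^2\equiv n\pmod 2$, and conclude that the three weight sums $\sum a_i+\sum b_i$, $\sum a_i+\sum c_i$, $\sum b_i+\sum c_i$ share a common parity. The paper's own argument is the one-line remark preceding the corollary, and your write-up simply makes the bookkeeping explicit.
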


It is a classical result of Atiyah and Hirzebruch \cite{AH} that if the circle acts non-trivially on a spin manifold $M$, then $\hat{A}(M)=0$.
The manifold with a $2$-balanced $S^1$-action is analogous to spin manifold.

\begin{lemma}\label{hat}\cite[Corollary 3.3]{HHH}
Let $M$ be a  smooth even-dimensional oriented closed manifold admitting a $2$-balanced $S^1$-action.
If the $S^1$-action is non-trivial, then 
\[
\hat{A}(M)=0.
\]
\end{lemma}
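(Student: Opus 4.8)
The final statement to prove is Lemma~\ref{hat}, which asserts that an even-dimensional oriented closed manifold admitting a non-trivial $2$-balanced $S^1$-action has vanishing $\hat A$-genus. Although the excerpt attributes this to \cite[Corollary 3.3]{HHH}, the plan is to give a self-contained argument via rigidity/localization, in the spirit of Atiyah--Hirzebruch \cite{AH} but tracking the parity obstruction that the $2$-balanced hypothesis is designed to kill.

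\medskip
\noindent\textbf{Outline of the approach.}
The plan is to use the equivariant index theorem for the Dirac-type operator together with the classical rigidity phenomenon. First I would reduce to the case where $M$ is connected and the fixed point set $M^{S^1}$ is nonempty; if the action is non-trivial but free, then $M$ fibers over $M/S^1$ and $\hat A(M)=0$ follows from multiplicativity of the $\hat A$-genus in fiber bundles with fiber $S^1$. So assume $M^{S^1}\neq\varnothing$. Even though $M$ need not be spin, the $2$-balanced condition on the weights is exactly what is needed to make a suitable power of the $S^1$-equivariant line bundle admit an equivariant square root after twisting, so that one can form an $S^1$-equivariant $\mathrm{spin}^c$ Dirac operator $D$ whose equivariant index $\mathrm{ind}_{S^1}(D)(\lambda)$ is a well-defined character of $S^1$, i.e.\ a finite Laurent polynomial in $\lambda$. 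Concretely, I would follow the Hirzebruch--Berger--Jung treatment \cite{HBJ92}: the $2$-balanced hypothesis guarantees that the relevant twisted Dirac operator is globally defined even in the non-spin case, and its constant term (the $\lambda\to 1$ specialization) computes the ordinary $\hat A$-genus $\hat A(M)$ up to a nonzero universal factor.

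\medskip
\noindent\textbf{Key steps in order.}
(i) Form the $S^1$-equivariant operator $D$ as above and record that $\mathrm{ind}_{S^1}(D)\in R(S^1)=\zz[\lambda,\lambda^{-1}]$, so it is a \emph{genuine} Laurent polynomial with no poles. (ii) Apply the Atiyah--Bott--Segal--Singer fixed point formula to express $\mathrm{ind}_{S^1}(D)(\lambda)$ as a sum over the connected components $F\subset M^{S^1}$ of local contributions built from the weights $w_{x,i}$ and the normal Euler classes; each local term is a rational function of $\lambda$ with poles only at roots of unity dividing the weights. (iii) Analyze the behavior as $\lambda\to\infty$ and as $\lambda\to 0$: since the global index is a Laurent polynomial, the principal parts must cancel, and the $2$-balanced condition forces the leading terms to combine with a uniform sign, which is the classical rigidity argument showing $\mathrm{ind}_{S^1}(D)$ is in fact a \emph{constant}, independent of $\lambda$. (iv) Evaluate that constant at $\lambda=1$ via the fixed point formula once more (or directly by the ordinary index theorem), obtaining $c\cdot\hat A(M)$ for an explicit nonzero constant $c$; but the same constant can also be read off from the behavior at a point $\lambda$ near a primitive root of unity $\zeta$ of large order, where one sees that the value is controlled entirely by the sub-fixed-sets of the $\zz/p$ action and, by downward induction on the order of isotropy, must vanish. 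Hence $\hat A(M)=0$.

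\medskip
\noindent\textbf{Main obstacle.}
The delicate point is step (iii)--(iv): making precise why the $2$-balanced hypothesis (parity of $\sum_i w_{x,i}$ constant over components) is exactly the right condition to run the rigidity argument without assuming $M$ is spin. The subtlety is bookkeeping of the half-integer weight shifts coming from the spinor bundle: on a non-spin manifold the individual $\mathrm{spin}^c$ Dirac operators at different fixed components carry different fractional twists, and only when the parities agree do these reassemble into a globally defined operator whose index lies in $R(S^1)$ rather than in a shifted lattice. I expect that handling this coherently — i.e.\ checking that the local fixed-point contributions, once the common fractional shift is factored out, sum to a Laurent polynomial and then arguing its constancy by the ``$N(\lambda)\to$ finite as $\lambda\to 0,\infty$'' comparison — is the technical heart of the proof; the rest is the standard Atiyah--Hirzebruch rigidity machinery applied verbatim. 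If one prefers to avoid rebuilding this machinery, the alternative is simply to invoke \cite[Corollary 3.3]{HHH} directly, which is the route taken here, noting that \Cref{coro} supplies the $2$-balanced hypothesis and \Cref{abc} supplies the fixed point data making it applicable.
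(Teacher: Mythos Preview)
The paper does not prove \Cref{hat} at all: it is quoted verbatim as \cite[Corollary 3.3]{HHH} and used as a black box. Your proposal therefore goes well beyond what the paper does, supplying a rigidity sketch in the Atiyah--Hirzebruch style where the paper simply cites the literature. Your final paragraph correctly identifies this, so in that sense you and the paper agree.

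Regarding the sketch itself: steps (i)--(iii) are the right shape, and you have correctly isolated the role of the $2$-balanced hypothesis --- it is precisely what makes the half-integer shifts from the spinor bundle line up across components so that the local contributions assemble into an honest element of $R(S^1)$. Step (iv), however, is more elaborate than necessary and somewhat misdirected. For the untwisted Dirac operator the vanishing is immediate from the $\lambda\to 0$ and $\lambda\to\infty$ limits already used in (iii): each fixed-point contribution carries a factor $\prod_i(\lambda^{w_{x,i}/2}-\lambda^{-w_{x,i}/2})^{-1}$ (times bounded characteristic-class terms), which tends to $0$ at both ends once $\dim M>0$. Hence the Laurent polynomial is bounded and vanishes at $0$ and $\infty$, so it is identically $0$, and evaluation at $\lambda=1$ gives $\hat A(M)=0$ directly. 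The detour through primitive roots of unity and induction on isotropy orders is the Bott--Taubes/Liu machinery designed for the harder twisted (Witten-genus) rigidity theorems; it is not needed here and invoking it obscures the simpler mechanism actually at work.
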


Make use of the equivariant characteristic numbers, Wiemeler showed that
\begin{lemma}\cite[Lemma 3.1]{Wiemeler23}\label{wie-Pk2P2k}
Let $M$ be a smooth orientable closed $S^1$-manifold of dimension $4k>0$ with exactly three fixed points.
Then all Pontrjagin numbers of $M$ except may be $p_{\frac{k}{2}}^2[M]$ ($k$ even) and $p^{}_k[M]$ vanish.
\end{lemma}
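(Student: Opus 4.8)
The plan is to evaluate every Pontrjagin number of $M$ by the Atiyah--Bott--Berline--Vergne localization theorem and then to feed in the rigid structure of the fixed point data. At an isolated fixed point $x$ with weights $w_{x,1},\dots,w_{x,2k}$ the complexified tangent space has equivariant Chern roots $\pm w_{x,l}\,t$, where $t$ generates $H^{*}(BS^{1})$, so
\[
p_{i}^{S^{1}}(TM)\big|_{x}=\sigma_{i}\!\left(w_{x,1}^{2},\dots,w_{x,2k}^{2}\right)t^{2i},\qquad
e^{S^{1}}(T_{x}M)=\epsilon(x)\Big(\prod\nolimits_{l}w_{x,l}\Big)t^{2k},
\]
where $\sigma_{i}$ denotes the $i$-th elementary symmetric polynomial. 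Hence for a partition $J=(j_{1},\dots,j_{r})$ with $|J|=m$,
\[
\int_{M}^{S^{1}}p_{j_{1}}^{S^{1}}(TM)\cdots p_{j_{r}}^{S^{1}}(TM)
=t^{\,2(m-k)}\sum_{x\in M^{S^{1}}}\frac{\sigma_{j_{1}}\!\left(w_{x,\bullet}^{2}\right)\cdots\sigma_{j_{r}}\!\left(w_{x,\bullet}^{2}\right)}{\epsilon(x)\,\prod_{l}w_{x,l}} .
\]
When $m<k$ the left-hand side lies in $H^{<0}_{S^{1}}(\mathrm{pt})=0$, so the weighted sum on the right vanishes; when $m=k$ it equals the Pontrjagin number $p_{J}[M]$. (The cases $k=1,2$ are vacuous, since then every Pontrjagin number is already $p_{k}[M]$ or $p_{\frac{k}{2}}^{2}[M]$, so assume $k\geqslant 3$.)

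Next I would substitute the fixed point data of \Cref{abc}(1). Writing $A=\prod_{i}a_{i}$, $B=\prod_{i}b_{i}$, $C=\prod_{i}c_{i}$ and, after replacing $M$ by $-M$ if necessary, taking the signs at $x_{1},x_{2},x_{3}$ to be $+,+,-$, set $\omega_{1}=\tfrac{1}{AB}$, $\omega_{2}=\tfrac{1}{AC}$, $\omega_{3}=-\tfrac{1}{BC}$. For a partition $J$ put $\langle J\rangle:=\sum_{s=1}^{3}\omega_{s}\prod_{l}\sigma_{j_{l}}\!\left(w_{x_{s},\bullet}^{2}\right)$; then $\langle J\rangle=0$ whenever $|J|<k$ and $\langle J\rangle=p_{J}[M]$ when $|J|=k$. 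Applying $\langle\varnothing\rangle=0$ gives $\tfrac{1}{AB}+\tfrac{1}{AC}=\tfrac{1}{BC}$, i.e. $A=B+C$, from which $\omega_{1}+\omega_{2}=-\omega_{3}$, $\omega_{1}+\omega_{3}=-\omega_{2}$ and $\omega_{2}+\omega_{3}=-\omega_{1}$.

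Now I would invoke \Cref{abc}(2): $\sum_{i}a_{i}^{2}=\sum_{i}b_{i}^{2}=\sum_{i}c_{i}^{2}$. This says precisely that $\sigma_{1}\!\left(w_{x,\bullet}^{2}\right)$ takes one common value at $x_{1},x_{2},x_{3}$. Consequently, if a partition $I$ of $k$ has a part equal to $1$, that factor can be pulled out of $\langle I\rangle$, leaving the common value times $\langle I\setminus\{1\}\rangle$, which is $0$ since $|I\setminus\{1\}|=k-1<k$; hence $p_{I}[M]=0$ for every such $I$. To dispose of the partitions all of whose parts are $\geqslant 2$, one wants the same phenomenon for higher symmetric functions: that $\sigma_{m}\!\left(w_{x,\bullet}^{2}\right)$ agrees at $x_{1},x_{2},x_{3}$ for every $m\leqslant\lceil k/2\rceil-1$, equivalently $\sum_{i}a_{i}^{2l}=\sum_{i}b_{i}^{2l}=\sum_{i}c_{i}^{2l}$ for $l\leqslant\lceil k/2\rceil-1$. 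Granting this, any part of $I$ of size $\leqslant\lceil k/2\rceil-1$ can again be factored out of $\langle I\rangle$ and the residual weighted sum vanishes, so $p_{I}[M]=0$ unless every part of $I$ exceeds $\lceil k/2\rceil-1$; but a partition of $k$ with that property must be either $(k)$ or, when $k$ is even, $(\tfrac{k}{2},\tfrac{k}{2})$. This gives the lemma.

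The main obstacle is establishing those higher power-sum identities when $k\geqslant 5$ (for $k=3,4$ one needs only \Cref{abc}(2)). Their source is rigidity: because the $S^{1}$-action is $2$-balanced (\Cref{coro}), besides the vanishing $\hat A(M)=0$ of \Cref{hat} one has the $2$-balanced analogue of Atiyah--Hirzebruch rigidity for the Dirac operator of $M$ twisted by the virtual tangential bundles that are the coefficients of the universal elliptic genus; the Atiyah--Bott fixed point formulas for these twisted operators, combined with $A=B+C$ and $\sum_{i}a_{i}^{2}=\sum_{i}b_{i}^{2}=\sum_{i}c_{i}^{2}$, then force the required agreement of power sums. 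Carrying out this rigidity argument carefully and checking that the degree bookkeeping leaves \emph{precisely} the two Pontrjagin numbers $p_{k}[M]$ and $p_{\frac{k}{2}}^{2}[M]$ unconstrained is the technical core; this is essentially the content of \cite[Lemma~3.1]{Wiemeler23}, carried out there through equivariant characteristic numbers and the localized restriction map $H^{*}_{S^{1}}(M)\to\bigoplus_{x}H^{*}_{S^{1}}(x)$.
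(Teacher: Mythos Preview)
The paper does not supply its own proof of this lemma; it is quoted from \cite{Wiemeler23} with only the remark that the argument there proceeds ``through equivariant characteristic numbers''. So there is nothing in the present paper to compare against, and your write-up has to stand on its own.

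Your localization set-up and the factoring trick are correct, and for $k\le 4$ they do finish the job: once \Cref{abc}(2) forces $\sigma_{1}(w^{2}_{x,\bullet})$ to be the same at all three fixed points, every partition of $k$ other than $(k)$ or $(\tfrac{k}{2},\tfrac{k}{2})$ contains a part equal to $1$, and the corresponding $\langle I\rangle$ collapses to a constant times $\langle I\setminus\{1\}\rangle=0$. The gap is for $k\ge 5$, where partitions such as $(3,2)$, $(2,2,2)$, $(3,3)$, $(4,2)$, \dots\ have no part equal to $1$. Your reduction then needs $\sigma_{m}(w^{2}_{x,\bullet})$ constant on $M^{S^{1}}$ for every $m\le\lceil k/2\rceil-1$, i.e.\ $\sum_i a_i^{2l}=\sum_i b_i^{2l}=\sum_i c_i^{2l}$ for $2\le l\le\lceil k/2\rceil-1$. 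These identities are \emph{not} contained in \Cref{abc}, and they are not forced by the relations $\langle J\rangle=0$, $|J|<k$, that you already possess: using $\omega_1+\omega_2+\omega_3=0$, the equation $\langle(m)\rangle=0$ reduces only to the single linear constraint $\sigma_m(a^2)\,\omega_3+\sigma_m(b^2)\,\omega_2+\sigma_m(c^2)\,\omega_1=0$, which is far from implying that the three values coincide.

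You are aware of this and appeal to rigidity of the twisted Dirac operators arising from the elliptic genus for the $2$-balanced action. That may well be a viable source of further relations among the weights, but it is precisely the substantive content of the lemma, and you do not carry it out: your final paragraph literally hands the remaining work back to \cite[Lemma~3.1]{Wiemeler23}, which is the statement under discussion. As written, then, the argument is complete only for $k\le 4$ and is circular for $k\ge 5$. To repair it along the lines you sketch you would have to specify exactly which rigid equivariant indices, when evaluated on the $(a,b)/(a,c)/(b,c)$ weight pattern and combined with $A=B+C$ and \Cref{abc}(2), yield each of the higher power-sum equalities; that computation is neither short nor supplied here.
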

Note that, compared with \Cref{wie-Pk2P2k}, some non-zero Pontrjagin numbers of $S^1$-manifolds imply the lower bound of the number of the fixed points. Suppose $N^{4mn}$ is a smooth manifold, if there exists a partition $(\lambda_1,\dots, \lambda_u)$ of $m$ such that the corresponding Pontrjagin number $(p^{}_{\lambda_1}\cdots p^{}_{\lambda_u})^n[N]$ is non-zero, Li and Liu \cite[Theorem 1.4]{LiLiu2011} proved that any $S^1$-action on $N$ must have at least $n + 1$ fixed points (see also \cite[Example 3.4]{Li2012}).
So for the smooth orientable closed manifold $M$ of dimension $4k>0$, if $p_{\frac{k}{2}}^2[M]$ ($k$ even) is non-zero, then any $S^1$-action on $M$ must have at least $3$ fixed points.

Now, we show our main \Cref{thm-Noexis3fixedpts}.
\begin{proof}[Proof of \Cref{thm-Noexis3fixedpts}]
Suppose that there exists a non-trivial circle action on $M$ with exactly three fixed points. There are many methods (\cite{Li2015, Jang2021, Wiemeler23}) to prove that the signature of $M$ is equal to $\pm 1$. We choose a suitable orientation such that $\sign(M)=1$. 
The action has to be 2-balanced by \Cref{coro} and $\hat{A}(M)=0$ by \Cref{hat}.
By \Cref{wie-Pk2P2k}, all Pontrjagin numbers of $M$ except may be $p_{\frac{k}{2}}^2[M]$ ($k$ even) and $p^{}_{k}[M]$ vanish.  
But it is contrary to \Cref{NOsign1Ahat0}, since $\sign(M)=1$ and $\hat{A}(M)=0$ cannot hold at the same time if the manifold has at most two non-zero Pontrjagin numbers $p_{\frac{k}{2}}^2[M]$ ($k$ even) and $p^{}_{k}[M]$.
\end{proof}

\section{Circle actions on rational projective planes}\label{sec-RPP}

Let $\mathcal{A}=\oplus_{i=0}^n A^i$ be a $1$-connected ($A^1=0$) graded commutative algebra over $\mathbb{Q}$ satisfying the Poincare duality.
It is a natural question to ask if there exists a simply-connected closed $n$-manifold $M$ such that $H^*(M;\mathbb{Q})=\mathcal{A}$.
The case of $\mathcal{A}=\mathbb{Q}[x]/\langle x^3\rangle$, $|x|=2k$, was studied in \cite{Su14, FS, KenSu19}.
A closed smooth $4k$-manifold with such rational cohomology ring is called a \emph{rational projective plane}.  In the dimension $4$, $8$, and $16$ (the dimensions of $\cp^2$, $\mathbb{H}P^2$ and $\mathbb{O}P^2$ respectively), rational projective planes evidently exist. After dimension $4$, $8$, and $16$, the next smallest dimension where a rational projective plane exists is $32$ (\cite[Theorem 1.1]{Su14}).  Fowler and Su \cite[Theorem A]{FS} showed that for $n\geqslant 8$, a rational projective plane can only exist in dimension $n=8k$ with $k=2^a+2^b$ for some non-negative integers $a$, $b$.

The proof of the \Cref{thm-rppnoS1} is dependent on the following two results.
\begin{lemma}\label{KlauKrec}\cite[Lemma 2.4]{KlauKrec}
Let $X$ be a simply connected space with $\widetilde{H}_i(X;\mathbb{Q})=0$ for $i<r$.
Then there is a $(r-1)$-connected space $Y$ and a map $f: Y\rightarrow X$ inducing isomorphisms in all rational homotopy and homology groups.
\end{lemma}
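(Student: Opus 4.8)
The plan is to take for $Y$ the $(r-1)$-connected cover of $X$ and to check that the canonical map has the required properties. Since $X$ is simply connected it admits both a Postnikov tower and a Whitehead (connected-cover) tower, so let $f\colon Y:=X\langle r-1\rangle\to X$ be the $(r-1)$-connected cover; by construction $Y$ is $(r-1)$-connected, and so everything reduces to showing that $f$ induces an isomorphism on rational homotopy groups and on rational homology groups. (The degenerate cases $r\leqslant 2$ are trivial: $Y=X$ already works, as $X$ is simply connected.)

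For the homotopy part, first I would translate the homological hypothesis into a homotopical one. Because $X$ is simply connected and $\widetilde H_i(X;\mathbb{Z})$ lies in the Serre class $\mathcal{C}$ of torsion abelian groups for $i<r$, the mod-$\mathcal{C}$ Hurewicz theorem of Serre gives $\pi_i(X)\in\mathcal{C}$, i.e. $\pi_i(X)$ is torsion, for every $i<r$. Now the covering map $f$ induces an isomorphism $\pi_i(Y)\xrightarrow{\ \cong\ }\pi_i(X)$ for all $i\geqslant r$, hence also after $\otimes\,\mathbb{Q}$; and for $i<r$ both $\pi_i(Y)$ (since $Y$ is $(r-1)$-connected) and $\pi_i(X)\otimes\mathbb{Q}$ (since $\pi_i(X)$ is torsion) vanish. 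Thus $\pi_\ast(f)\otimes\mathbb{Q}$ is an isomorphism in every degree.

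For the homology part I would use the fibration sequence $Y\to X\to X[r-1]$, where $X[r-1]$ is the $(r-1)$-st Postnikov section of $X$: its homotopy groups are $\pi_i(X)$ for $2\leqslant i\leqslant r-1$ and zero otherwise, so $X[r-1]$ is simply connected with all homotopy groups torsion, and therefore $\widetilde H^\ast(X[r-1];\mathbb{Q})=0$ (present $X[r-1]$ as an iterated principal fibration with fibers $K(A,n)$ for torsion $A$, each rationally acyclic, and run the rational Serre spectral sequence upward; equivalently, its rationalization is contractible). Feeding this into the rational Serre cohomology spectral sequence of $Y\to X\to X[r-1]$, whose base is simply connected, one gets $E_2^{p,q}=H^p(X[r-1];\mathbb{Q})\otimes H^q(Y;\mathbb{Q})$ concentrated in the column $p=0$; the spectral sequence therefore collapses, and the edge homomorphism is exactly the isomorphism $f^\ast\colon H^q(X;\mathbb{Q})\xrightarrow{\ \cong\ }H^q(Y;\mathbb{Q})$ for all $q$. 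Dualizing yields the claimed rational homology isomorphism. (One could instead argue entirely with minimal Sullivan models, but the Whitehead-tower route is the shortest.)

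The step I expect to be the main obstacle is not any single computation but rather keeping the hypotheses honest: the statement assumes only that $X$ is a simply connected space, so I must make sure that every tool invoked — existence of Postnikov and connected-cover towers, the mod-$\mathcal{C}$ Hurewicz theorem, and the Serre spectral sequence over a simply connected base — is available with no finite-type or nilpotency assumption beyond simple connectivity, and that the chosen model $X\langle r-1\rangle$ genuinely has the homotopy type of a CW complex so that it qualifies as a ``space'' in the sense of the statement.
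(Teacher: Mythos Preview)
The paper does not supply its own proof of this lemma; it is quoted verbatim from Klaus--Kreck \cite[Lemma 2.4]{KlauKrec} and used as a black box in the proof of \Cref{thm-rppnoS1}. So there is nothing in the present paper to compare your argument against.

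That said, your argument is correct and is precisely the standard one (and essentially the one Klaus--Kreck give): pass to the $(r-1)$-connected cover $Y=X\langle r-1\rangle$, use Serre's mod-$\mathcal{C}$ Hurewicz theorem with $\mathcal{C}$ the class of torsion abelian groups to see that $\pi_i(X)$ is torsion for $i<r$, and then run the rational Serre spectral sequence of the fibration $Y\to X\to P_{r-1}X$ over the rationally acyclic base $P_{r-1}X$. Your identification of the edge map with $f^\ast$ is correct since $f$ is the fiber inclusion. The caveat you flag at the end --- that the ambient tools (Postnikov/Whitehead towers, mod-$\mathcal{C}$ Hurewicz, Serre spectral sequence) require only simple connectivity and a CW homotopy type, not finite type --- is well placed and is indeed the only point where care is needed; all of these hold in the generality required.
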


\begin{lemma}\label{HH}\cite{HH, HH2012}
Let $M$ be a smooth $2n$-dimensional, closed, oriented and connected manifold with finite second and fourth homotopy groups and endowed with a non-trivial smooth $S^1$-action. Then 
\[
\hat{A} (M)=0.
\]
\end{lemma}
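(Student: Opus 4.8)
The plan is to extend the classical Atiyah--Hirzebruch vanishing theorem (if $S^1$ acts non-trivially on a \emph{spin} manifold, then $\hat{A}=0$, \cite{AH}) to the present non-spin setting, using the finiteness of $\pi_2(M)$ and $\pi_4(M)$ to render the obstructions to spinability rationally invisible. Since $\hat{A}(M)$ is a rational characteristic number built from Pontrjagin classes, it suffices to carry out every step modulo torsion, and the homotopy hypotheses are tailored exactly to this: they force the relevant low-degree obstruction classes to be torsion.

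First I would translate the homotopy conditions into cohomological ones. Assuming $M$ simply connected (the case needed for the application to rational projective planes; in general one reduces to it via a cover or a rational model), the rational Hurewicz theorem gives $H^2(M;\qq)\cong\pi_2(M)\otimes\qq=0$, and the finiteness of $\pi_4(M)$ similarly constrains the degree-$4$ rational cohomology. In particular $w_2(M)$ is a torsion class and its integral Bockstein $W_3$ is $2$-torsion, so $M$ is \emph{rationally spin}; the vanishing $H^2(M;\qq)=0$ will be the decisive input in the final step.

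Next I would set up an equivariant index. Working with a $\spin^{c}$-structure compatible with the $S^1$-action, the equivariant $\spin^{c}$-Dirac operator has an index that is a virtual character of $S^1$, and the Atiyah--Hirzebruch/Bott--Taubes rigidity argument shows this character is rigid, i.e.\ constant; a weight analysis at the fixed-point set forces the constant to vanish whenever the action is non-trivial. It then remains to identify this index with $\hat{A}(M)$: the $\spin^{c}$-index equals $\bigl(\hat{A}(M)\,e^{c_{1}/2}\bigr)[M]$, and because $c_{1}$ lives in $H^{2}(M;\qq)=0$, every term involving $c_1$ dies rationally and only $\hat{A}(M)$ survives. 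Hence $\hat{A}(M)=0$. Here the role of $\pi_4(M)$ is to supply the degree-$4$ control needed to build the equivariant structure and to cancel the anomaly in the rigidity argument modulo torsion.

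The main obstacle is precisely this third step: the Dirac-type operator and, above all, its rigidity are classically available only for genuine spin manifolds, whereas here $w_2(M)$ may be a non-zero $2$-torsion class and the $\spin^{c}$-obstruction $W_3$ need not vanish. The crux of the proof is therefore to run the rigidity machinery in the $\spin^{c}$ (or rational-spin) category and to verify that the torsion obstructions $w_2$ and $W_3$ obstruct neither the existence of the equivariant operator nor the cancellation needed for rigidity. This is exactly the point at which the finiteness of $\pi_2(M)$ (killing $H^2(M;\qq)$, hence the $c_1$-correction) and of $\pi_4(M)$ (controlling $H^4(M;\qq)$, hence the first Pontrjagin/anomaly term) become indispensable.
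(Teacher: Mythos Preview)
The paper does not supply a proof of this lemma: it is quoted verbatim from Herrera--Herrera \cite{HH, HH2012} and used as a black box in the proof of \Cref{thm-rppnoS1}. There is therefore no in-paper argument to compare your proposal against.

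Regarding your sketch on its own merits, the broad strategy (equivariant index of a $\spin^c$ Dirac operator, rigidity, then kill the $c_1$-correction using $H^2(M;\qq)=0$) is in the spirit of the Herrera--Herrera proof, but your rigidity step has a genuine gap. The $\spin^c$ Dirac operator is \emph{not} rigid: its equivariant index is governed by the \emph{equivariant} class $c_1^{S^1}$ of the determinant line bundle, and this need not vanish merely because the ordinary $c_1$ is torsion. Likewise the Atiyah--Hirzebruch ``weight analysis forces the constant to vanish'' step depends on the half-integrality of spinor weights, which is a genuinely spin phenomenon and breaks in the $\spin^c$ setting. In \cite{HH} the authors work harder precisely here: the finiteness of $\pi_2$ is used to lift the $S^1$-action to the (projective) spinor bundle so that a Witten--Taubes style rigidity argument can still be run, and the finiteness of $\pi_4$ enters in controlling the equivariant $p_1$/anomaly term required for that machinery. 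Your write-up gestures at both hypotheses but does not explain how they intervene at the \emph{equivariant} level, which is exactly where the difficulty lies; note also that the original argument of \cite{HH} required the erratum \cite{HH2012}, a sign that this step is genuinely delicate and not a matter of ``every step modulo torsion''.
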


\begin{proof}[Proof of \Cref{thm-rppnoS1}]
Let $M$ be a $8k$-dimensional rational projective plane with $k>2$, the only possible non-zero Pontrjagin numbers are $p^{}_{2k}[M]$ and $p_k^2[M]$.
By the definition of the signature, the signature of any rational projective plane $M$ is $\pm 1$. We sign it as 
\[
\sign(M)=1.
\]
By Lemma \ref{KlauKrec}, $\pi _2(M)$ and $\pi _4(M)$ are both finite groups.
If $M$ admits a non-trivial smooth $S^1$-action,
 by Lemma \ref{HH} we have 
\[
\hat{A} (M)=0. 
\]  
According to \Cref{NOsign1Ahat0}, the conclusion is immediately followed.
\end{proof}

As to the highly connected manifolds, we refer it as the $(2k-1)$-connected $4k$-manifold, we arrive at the following result.
\begin{proposition}
Let $M$ be an orientable closed  $(2k-1)$-connected manifold with dimension $4k$, $k>2$ odd. If $M$ admits a non-trivial $S^1$-action, then 
 $M$ is rationally zero-bordant.
\end{proposition}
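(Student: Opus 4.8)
The plan is to reduce the statement to \Cref{NOsign1Ahat0} exactly as in the proof of \Cref{thm-rppnoS1}, replacing the role played there by Fowler--Su's structure theorem with the standard fact about $(2k-1)$-connected $4k$-manifolds. First I would recall that a $(2k-1)$-connected closed $4k$-manifold $M$ has $H^i(M;\qq)=0$ for $0<i<2k$ (and dually for $2k<i<4k$), so the only possibly non-zero Pontrjagin number in complementary degree comes from the middle dimension: all $p^{}_j[M]$ vanish unless the partition of $k$ involved uses only the part $k$ (giving $p^{}_k[M]$) or only parts equal to $k/2$ when $k$ is even (giving $p_{k/2}^2[M]$); since here $k$ is odd, the $p_{k/2}^2$ term does not even arise, and $M$ falls under the hypotheses of \Cref{NOsign1Ahat0} with only the single possibly non-zero Pontrjagin number $p^{}_k[M]$.

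Next I would bring in the $S^1$-action. Because $M$ is $(2k-1)$-connected with $k\geqslant 2$, we have $\pi_2(M)=\pi_4(M)=0$ (as $k>2$, in fact $4\leqslant 2k-1$), so these homotopy groups are certainly finite; hence \Cref{HH} applies and a non-trivial smooth $S^1$-action forces $\hat{A}(M)=0$. Now invoke \eqref{eq-koddsignAhat}: for $k$ odd, $\sign(M)=-2^{2k+1}(2^{2k-1}-1)\,\hat{A}(M)$, so $\hat{A}(M)=0$ implies $\sign(M)=0$. Combined with \Cref{wie-Pk2P2k}-style vanishing (here coming directly from high connectivity rather than from a three-fixed-point hypothesis), we get that every Pontrjagin number of $M$ vanishes and $\sign(M)=0$. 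Since a closed oriented $4k$-manifold is rationally null-bordant in $\Omega^{SO}_*\otimes\qq$ precisely when all its Pontrjagin numbers vanish (Thom), this gives that $M$ is rationally zero-bordant, which is the claim.

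I would organise the write-up as: (i) identify the rational cohomology, hence which Pontrjagin numbers can be non-zero; (ii) note $\pi_2(M),\pi_4(M)$ finite and apply \Cref{HH} to deduce $\hat{A}(M)=0$ from the non-trivial action; (iii) apply \eqref{eq-koddsignAhat} to conclude $\sign(M)=0$, equivalently $p^{}_k[M]=0$; (iv) conclude via Thom's theorem that $[M]=0$ in $\Omega^{SO}_{4k}\otimes\qq$. The main subtlety is step (i): one must be careful that high connectivity really does kill all mixed Pontrjagin numbers and that, $k$ being odd, the $p_{k/2}^2$ invariant is irrelevant, so that \Cref{NOsign1Ahat0}'s hypotheses (or rather the odd-$k$ branch of its proof) genuinely apply; everything after that is formal. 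One could alternatively phrase (iii)--(iv) as a single appeal to \Cref{NOsign1Ahat0} by noting that $\hat{A}(M)=0$ together with $\sign(M)\neq 0$ is impossible in odd dimensions $4k$, forcing $\sign(M)=0$ and hence, with all other Pontrjagin numbers already zero, rational null-bordism.
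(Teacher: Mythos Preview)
Your argument is correct, and its overall shape matches the paper's: identify that only $p^{}_k[M]$ can be non-zero, deduce $\hat{A}(M)=0$ from the non-trivial $S^1$-action, conclude $p^{}_k[M]=0$, and invoke Thom. The one genuine difference is in how you obtain $\hat{A}(M)=0$. You appeal to \Cref{HH} (Herrera--Herrera) via the vanishing of $\pi_2(M)$ and $\pi_4(M)$, in direct parallel with the proof of \Cref{thm-rppnoS1}. The paper instead observes that a $(2k-1)$-connected manifold with $k>2$ is in particular $2$-connected, hence spin, and then applies the classical Atiyah--Hirzebruch theorem directly. Both routes are valid here; the paper's is slightly more elementary (it avoids the stronger Herrera--Herrera input), while yours has the virtue of making the analogy with the rational projective plane case fully explicit. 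A minor cosmetic point: the paper reads off $p^{}_k[M]=0$ straight from \eqref{eq-Ahatgenus} rather than detouring through \eqref{eq-koddsignAhat} and $\sign(M)$.
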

\begin{proof}
When $k$ is odd, the $(2k-1)$-connected $4k$-manifold $M$ has at most one non-zero Pontrjagin number $p^{}_k[M]$, and $M$ is spin when $k>2$. If $M$ admits a non-trivial $S^1$-action, then by the classical result of Atiyah and Hirzebruch, $\hat{A}(M)=0$. By the equation \eqref{eq-Ahatgenus}, it implies $p^{}_k[M]=0$, i.e. all Pontrjagin numbers of $M$ are zero. Due to \cite[Corollary 18.10]{MilnorStash}, then some positive multiple $M^{4k}+\cdots+M^{4k}$ is an oriented boundary.
\end{proof}

\end{document}